\newenvironment{proof}[1][Proof]{\begin{trivlist}
\item[\hskip \labelsep {\bfseries #1}]}{$\blacksquare$ \end{trivlist}}
\newtheorem{ex}{Example}[section]
  \newtheorem{theorem}{Theorem}[section]
  \newtheorem{lemma}[theorem]{Lemma}
  \newtheorem{construction}{Construction}[section]
\newtheorem{problem}{Problem}[section]
\newcommand{\qed}{\nobreak \ifvmode \relax
 \else \ifdim\lastskip<1.5em \hskip-\lastskip
\hskip1.5em plus0em minus0.5em \fi \nobreak
 \vrule height0.75em width0.5em depth0.25em\fi}
\begin{document}

\title{Cyclic, Simple, and Indecomposable Three-fold Triple Systems}
\author{
\vspace{0.25in}
 Nabil Shalaby \hspace{0.5in} Bradley Sheppard \hspace{0.5in} Daniela Silvesan\\
 Department of Mathematics and Statistics \\
 Memorial University of Newfoundland \\
 St. John's, Newfoundland \\
 CANADA A1C 5S7
}
 \maketitle
 
\vspace{0.25in}

{\bf This paper is dedicated to the memory of Dr. Rolf Rees (1960-2013).}

\vspace{0.25in}

\begin{abstract}

In $2000$, Rees and Shalaby constructed simple indecomposable two-fold cyclic triple systems for all $v\equiv 0,~1,~3,~4,~7,~\textup{and}~9~(\textup{mod}~12)$
 where $v=4$ or $v\geq12$, using Skolem-type sequences. 

We construct, using Skolem-type sequences, three-fold triple systems having the properties of being cyclic, simple, and indecomposable
 for all admissible orders $v$, with some possible exceptions for $v=9$ and $v=24c+57$, where $c\geq 2$ is a constant.
To prove the simplicity we used a Mathematica computer program. We list in the Appendix the code and the results of the program.
\end{abstract}

{\bf Keywords:} cyclic triple systems; Skolem-type sequences; $\lambda$-fold triple systems; indecomposable and simple designs

\section{Introduction}

A {\em $\lambda$-fold} triple system of order $v$, denoted by TS$_{\lambda}(v)$, is a pair $(V,\cal{B})$ where
$V$ is a $v$-set of points and $\cal{B}$ is a set of $3$-subsets {\em (blocks)} such
that any $2$-subset of $V$ appears in exactly $\lambda$ blocks. An {\em automorphism group} of an TS$_{\lambda}(v)$ is a permutation on $V$ leaving $\cal{B}$ invariant. An TS$_{\lambda}(v)$ is {\em cyclic} if its automorphism group contains a $v$-cycle. If $\lambda=1$, an TS$_{\lambda}(v)$ is called {\em Steiner triple system} and is denoted by STS$(v)$. A cyclic STS$(v)$ is denoted by CSTS$(v)$.

An TS$_\lambda(v)$ is {\em simple} if it contains no repeated blocks. An TS$_\lambda(v)$ is called {\em indecomposable} if its blocks set $\cal{B}$ cannot be partitioned into sets $\mathcal{B}_1$, $\mathcal{B}_2$ of blocks of the form TS$_{\lambda_1}(v)$ and TS$_{\lambda_2}(v)$, where $\lambda_1+\lambda_2=\lambda$ with $\lambda_1,\lambda_2\geq 1$. A cyclic TS$_\lambda(v)$ is called {\em cyclically indecomposable} if its block set $\cal{B}$ cannot be partitioned into sets $\mathcal{B}_1$, $\mathcal{B}_2$ of blocks to form a cyclic TS$_{\lambda_1}(v)$ and TS$_{\lambda_2}(v)$, where $\lambda_1+\lambda_2=\lambda$ with $\lambda_1,\lambda_2\geq 1$.

The constructions of triple systems with the properties cyclic, simple, and indecomposable, were studied by many researchers for one property at a time; for example,
 cyclic triple systems for all $\lambda$s were constructed in \cite{colbourn2, silvesan}, simple for $\lambda=2$
 in \cite{stinson} and simple for every
 $v$ and $\lambda$ satisfying the necessary conditions in \cite{dehon}. Also, some of the properties were combined in studies. For example, in \cite{wang},
 cyclic and simple two-fold triple systems for all admissible orders were constructed, while in \cite{archdeacon, colbournrosa, dinitz, kramer, milici, zhang2}, simple and 
 indecomposable designs for $\lambda=2,3,4,5,6$ and all admissible $v$ were constructed. In \cite{zhang}, simple and indecomposable designs 
were constructed for all $v\geq 24\lambda -5$ satisfying the necessary conditions. For the general case of $\lambda > 6$, Colbourn and Colbourn \cite{colbourn} 
constructed a single indecomposable TS$_{\lambda}(v)$ for each odd $\lambda$. Shen \cite{shen} used Colbourn and Colbourn result and some recursive constructions 
to prove the necessary conditions are asymptotically sufficient. Specifically, if $\lambda$ is odd, then there exists a constant $v_0$ depending on $\lambda$ with an
 indecomposable simple TS$_\lambda(v)$ design for all $v\geq v_0$ satisfying the necessary conditions. In \cite{gruttmuller}, the authors constructed two-fold 
cyclically indecomposable triple systems for all admissible orders. The authors also checked exhaustively the cyclic triple systems TS$_\lambda(v)$ for
 $\lambda=2,\;v\leq 33$ and $\lambda=3,\;v\leq21$ that are cyclically indecomposable and determined if they are decomposable (to non cyclic) or not. 

In $2000$, Rees and Shalaby \cite{reesshalaby} constructed simple indecomposable two-fold cyclic triple systems for all $v\equiv 0,~1,~3,~4,~7,~\textup{and}~9~(\textup{mod}~12)$
 where $v=4$ or $v\geq12$ using Skolem-type sequences. They acknowledged that the analogous problem for $\lambda >2$ is more difficult.

In $1974$, Kramer \cite{kramer} constructed indecomposable three-fold triple systems for all admissible orders. We noticed that Kramer's construction for 
$v\equiv 1\;\textup{or}\;5\,(\textup{mod}\;6)$ gives also cyclic and simple designs.

In this paper, we construct three-fold triple systems having the properties of being cyclic, simple, and indecomposable for all admissible orders
$v\equiv 3~(\textup{mod}~6)$, except for $v=9$ and $v=24c+57$, $c\geq 2$.

\section{Preliminaries}

Let $D$ be a multi set of positive integers with $|D|=n$. A {\em Skolem-type sequence of order $n$} is a sequence $(s_1,\ldots,s_t),t\geq 2n$ of $i\in D$ such that
 for each $i\in D$ there is exactly one $j\in \{1,\ldots,t-i\}$ such that $s_j=s_{j+i}=i$. Positions in the sequence not occupied by integers $i\in D$ contain null
 elements. The null elements in the sequence are also called {\em hooks}, {\em zeros} or {\em holes}. As examples, $(1,1,6,2,5,2,1,1,6,5)$ is a Skolem-type sequence 
of order $5$ and $(7,5,2,0,2,0,5,7,1,1)$ is a Skolem-type sequence of order $4$.

Some special Skolem-type sequences are described below.

A {\it Skolem sequence of order $n$} is a sequence $S_n=(s_{1},s_{2},\ldots, s_{2n})$ of $2n$ integers which satisfies the conditions:
\begin{enumerate}
\item for every $k\in \{1,2,\ldots,n\}$ there are exactly two elements $s_{i},s_{j}\in S$ such that $s_{i}=s_{j}=k$, and
\item if $s_{i}=s_{j}=k,\;i<j$, then $j-i=k.$
\end{enumerate}
Skolem sequences are also written as collections of ordered pairs $\{(a_i,b_i):1\leq i\leq n,\;b_i-a_i=i\}$ with $\cup_{i=1}^{n}\{a_i,b_i\}=\{1,2,\ldots,2n\}$.

For example, $S_5=(1,1,3,4,5,3,2,4,2,5)$ is a Skolem sequence of order $5$ or, equivalently, the collection $\{(1,2),(7,9),(3,6),(4,8),(5,10)\}$.

Equivalently, a {\it Skolem sequence of order $n$} is a Skolem-type sequence with $t=2n$ and $D=\{1,\ldots,n\}$.

A {\em hooked Skolem sequence of order $n$} is a sequence $hS_n=(s_{1},\ldots, s_{2n-1}, s_{2n+1})$ of $2n+1$ integers which satisfies the above definition, as well as $s_{2n}=0.$

As an example, $hS_6=(1,1,2,5,2,4,6,3,5,4,3,0,6)$ is a hooked Skolem sequence of order $6$ or, equivalently, the collection $\{(1,2),(3,5),(8,11),
(6,10), (4,9),(7,13)\}$.

A {\em (hooked) Langford sequence of length $n$ and defect $d$, $n>d$} is a sequence $L_d^n=(l_{i})$ of $2n$ $(2n+1)$ integers which satisfies:
\begin{enumerate}
\item for every $k\in \{d,d+1,\ldots,d+n-1\}$, there exist exactly two elements $l_{i},l_{j}\in L$ such that $l_{i}=l_{j}=k$,
\item if $l_{i}=l_{j}=k$ with $i<j$, then $j-i=k$,
\item in a hooked sequence $l_{2n}=0$.
\end{enumerate}

We noticed that Kramer's construction \cite{kramer} can be obtained using the canonical starter
 $v-2,v-4,\ldots,3,1,1,3,\ldots,v-4,v-2$ and taking the base blocks $\{0,i,b_i\}(\textup{mod}\;v)|i=1,2,\ldots,\frac{1}{2}(v-1)$. So, Kramer's construction can
 be obtained using Skolem-type sequences.

We prove next, that Kramer's construction for indecomposable triple systems produces simple designs.

\begin{theorem}\cite{kramer}{\label{kramer}}
The blocks $\{\{0,\alpha,-\alpha\}(\textup{mod}\;v)|\alpha=1,\ldots,\frac{1}{2}(v-1)\}$ for $v\equiv 1\;\textup{or}\;5\,(\textup{mod}\;6)$ form a cyclic, simple, and indecomposable 
three-fold triple system of order $v$.
\end{theorem}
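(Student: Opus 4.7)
My plan is to verify the three properties in order---the $\mathrm{TS}_3(v)$ structure, simplicity, and indecomposability---the last being the substantive point.

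First, I would check that $(V,\mathcal{B})$ is a cyclic $\mathrm{TS}_3(v)$ by the method of differences. The base block $B_\alpha = \{0,\alpha,-\alpha\}$ contributes the difference multiset $\{\pm\alpha,\pm\alpha,\pm 2\alpha\}\pmod{v}$. Writing $\phi(\beta)$ for the short representative of $\pm 2\beta$ in $\{1,\ldots,(v-1)/2\}$, each short difference $d\in\{1,\ldots,(v-1)/2\}$ arises twice from $B_d$ and once from $B_{\phi^{-1}(d)}$; the hypothesis $\gcd(v,6)=1$ (implied by $v\equiv 1,5\pmod 6$) ensures that $\phi$ is a well-defined bijection and that $\phi(\beta)\neq\beta$ for all $\beta$. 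Cyclicity is built into the construction.

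Second, to see that the design is simple I would show that each orbit of the shift action has full length $v$ and that distinct orbits give disjoint sets of blocks. The stabilizer of $B_\alpha$ in $\mathbb{Z}_v$ is trivial: a shift $t$ fixing $\{0,\alpha,-\alpha\}$ setwise forces $3\alpha\equiv 0\pmod v$, hence $\alpha\equiv 0$ because $\gcd(v,3)=1$. Running through the possible matchings of $\{t,t+\alpha_1,t-\alpha_1\}$ with $\{0,\alpha_2,-\alpha_2\}$ shows that $B_{\alpha_1}+t=B_{\alpha_2}$ can occur only if $\alpha_1\equiv\pm\alpha_2\pmod v$, which is impossible for distinct $\alpha_1,\alpha_2\in\{1,\ldots,(v-1)/2\}$.

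Third, for indecomposability, suppose toward a contradiction that $\mathcal{B}=\mathcal{B}_1\cup\mathcal{B}_2$ (disjoint) with $(V,\mathcal{B}_1)$ an $\mathrm{STS}(v)$ and $(V,\mathcal{B}_2)$ a $\mathrm{TS}_2(v)$. Let $n_\alpha$ be the number of blocks of $\mathcal{B}_1$ lying in the cyclic orbit of $B_\alpha$. Counting, for each short difference $d$, the $v$ pairs that realise it (each covered exactly once by $\mathcal{B}_1$) yields
\[
2n_d + n_{\phi^{-1}(d)} = v,
\]
because every translate of $B_\beta$ contributes the short difference $d$ twice if $\beta=d$, once if $\phi(\beta)=d$, and zero otherwise. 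Along any $\phi$-orbit $\alpha_1\to\alpha_2\to\cdots\to\alpha_k\to\alpha_1$ this reads $a_{i+1}=(v-a_i)/2$; the unique periodic real solution is $a_i=v/3$, which fails to be an integer when $\gcd(v,3)=1$---the desired contradiction.

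The main obstacle is identifying the right bookkeeping in the indecomposability step; once the constraint $2n_d+n_{\phi^{-1}(d)}=v$ is in hand, non-integrality of $v/3$ closes the argument. A pleasant bonus is that this gives full (rather than merely cyclic) indecomposability, since the orbit counts $n_\alpha$ are defined without imposing any cyclicity hypothesis on $\mathcal{B}_1$.
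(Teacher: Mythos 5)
Your proposal is correct, and it differs from the paper's treatment in a substantive way: the paper proves only simplicity from scratch and cites Kramer for cyclicity and indecomposability, whereas you prove all three properties self-contained. For simplicity, the paper runs an explicit case analysis over the possible values of the shift $k\in\{0,i_2,v-i_2\}$ matching $\{0,i_2,v-i_2\}$ with $\{0,i_1,v-i_1\}+k$; your stabilizer argument (summing the elements of $\{t,t+\alpha,t-\alpha\}$ to force $3t\equiv 0$, hence $t=0$ since $\gcd(v,3)=1$) collapses those cases into one line and is cleaner, though it lands in the same place. The genuinely new content is your indecomposability argument: the relation $2n_d+n_{\phi^{-1}(d)}=v$ with $\phi(\beta)$ the short representative of $2\beta$, iterated around a $\phi$-orbit to force $n_\alpha=v/3\notin\mathbb{Z}$, is a complete replacement for the citation of Kramer, and it is pleasing that it establishes full (not merely cyclic) indecomposability since no cyclicity is assumed of the putative $\mathrm{STS}(v)$ part. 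It is worth noting that this is essentially the same bookkeeping the paper itself deploys later (Theorem \ref{ind1}) for the $v\equiv 3\pmod 6$ constructions, where the analogous equation $2f_1+f_2=2n+1$ appears; your observation that the recursion $a_{i+1}=(v-a_i)/2$ has the unique periodic solution $v/3$ is exactly the mechanism at work there too. The only caveats are cosmetic: the fixed-point step deserves the one-line justification that $f^k$ is affine with slope $(-1/2)^k\neq 1$ and hence has the same unique fixed point $v/3$ as $f$, and you should note explicitly that a decomposition of a $\mathrm{TS}_3(v)$ must have $\{\lambda_1,\lambda_2\}=\{1,2\}$, so an embedded $\mathrm{STS}(v)$ is the only case to exclude. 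Neither is a gap.
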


\begin{proof}
Let $v=6n+1$. The design is cyclic and indecomposable \cite{kramer}. We prove that the cyclic three-fold triple systems produced by
 $\{\{0,\alpha,-\alpha\}(\textup{mod}\;v)|\alpha=1,\ldots,\frac{1}{2}(v-1)\}$  is also simple.

Suppose that the construction above produces $\{x,y,z\}$ as a repeated block. Any block $\{x,y,z\}$ is of the form $\{0,i,6n+1-i\}+k$ for some
 $i=1,2,\ldots,\frac{1}{2}(v-1)$ and $k \in \mathbb {Z}_{6n+1}$. Hence, if $\{x,y,z\}$ is a repeated block we have
$$\{0,i_1,6n+1-i_1\}+k_1=\{0,i_2,6n+1-i_2\}+k_2$$
whence,
$$\{0,i_2,6n+1-i_2\}=\{0,i_1,6n+1-i_1\}+k$$
for some $i_1,i_2\in\{1,2,\ldots,\frac{1}{2}(v-1)\}$ and some $k\in \mathbb{Z}_{6n+1}$.

If $k=0$, we have $i_2=6n+1-i_1$ and $i_1=6n+1-i_2$, which is impossible since $6n+1-i_1> i_2$ and $6n+1-i_2> i_1$ by definition (i.e., $i_1,i_2\in\{1,2,\ldots,3n\}$
 while $6n+1-i_1,\,6n+1-i_2\in\{3n+1,\ldots,6n\}$.

If $k=i_2$, we have $\begin{cases} i_1+i_2=6n+1 \\ 6n+1-i_1+i_2=6n+1-i_2 \end{cases}$ or 

$\begin{cases} i_1+i_2=6n+1-i_2 \\ 6n+1-i_1+i_2=6n+1. \end{cases}$

Since both $i_1$ and $i_2$ are at most $3n$, it is impossible to have $i_1+i_2=6n+1$. Also $i_1\neq i_2$.

If $k=6n+1-i_2$ we have $\begin{cases} i_1+6n+1-i_2=6n+1 \\ 6n+1-i_1+6n+1-i_2=i_2+6n+1 \end{cases}  \Leftrightarrow  \linebreak \begin{cases} i_1+i_2=6n+1-i_1 \\ 6n+1-i_2+i_1=6n+1 \end{cases}$ or $\begin{cases} i_1+6n+1-i_2=i_2 \\ 6n+1-i_1+6n+1-i_2=6n+1. \end{cases}$

Since $6n+1-i_2 > i_2$ , it is impossible to have $i_1+6n+1-i_2=i_2$.

It follows that our design is simple. The case for $v=6n+5$ is similar. \end{proof}

In order to completely solve the case $\lambda=3$, we have new constructions that give cyclic, simple, and indecomposable three-fold triple systems for $v\equiv 3~(\textup{mod}\;6)$, $v\neq 9$ and
$v\neq 24c+57$, $c\geq 2$.

\section{Simple Three-fold Cyclic Triple Systems}

\begin{lemma} \label{lem1} For every $n\equiv 0 ~ \textup{or} ~1 ~(\textup{mod}\,4)$, $n\geq 8$, there is a Skolem sequence of order $n$ in
which $s_1=s_2=1$ and $s_{2n-2}=s_{2n}=2$.

\end{lemma}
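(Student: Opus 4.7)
The plan is to realize the desired Skolem sequence by fixing the four forced entries and filling everything else from a hooked Langford sequence. Placing $s_1=s_2=1$ and $s_{2n-2}=s_{2n}=2$ leaves the positions $3,4,\ldots,2n-3,2n-1$, which must carry the values $3,4,\ldots,n$, each twice, with the Skolem difference property $s_i=s_j=k \Rightarrow j-i=k$. Shifting indices by two, this is exactly the condition defining a hooked Langford sequence of length $n-2$ and defect $3$: the Langford hook at position $2(n-2)=2n-4$ corresponds to Skolem position $2n-2$, which we then overwrite with a $2$.

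Concretely, given $L_3^{n-2}=(l_1,l_2,\ldots,l_{2n-5},l_{2n-3})$ with $l_{2n-4}=0$, I would set
\[
s_1=s_2=1,\qquad s_{i+2}=l_i \text{ for } i\in\{1,2,\ldots,2n-5,\,2n-3\},\qquad s_{2n-2}=s_{2n}=2.
\]
Because translation preserves differences, the Skolem conditions for the values $3,\ldots,n$ follow immediately from the Langford conditions, while the conditions for $1$ and $2$ are built in by construction. Next I would match the parameter ranges: for $n\equiv 0,1\pmod 4$ with $n\geq 8$, the Langford parameters satisfy $m:=n-2\equiv 2,3\pmod 4$ and $m\geq 6\geq 2d$, which are precisely the congruence and lower-bound conditions under which hooked Langford sequences with odd defect $d=3$ are known to exist (complementary to the residue classes carrying unhooked Langford sequences of odd defect).

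The main obstacle, and the only one, is justifying the existence of hooked Langford sequences $L_3^{n-2}$ throughout the admissible range. I would either invoke the classical existence theorem for defect-$3$ (hooked) Langford sequences, or, if a self-contained exposition is preferred, give explicit formulas splitting into the two residue classes $m=4k+2$ and $m=4k+3$, $k\geq 1$, along with the handful of small base cases. Either route is routine once the reduction above is in hand, and delivers the claimed Skolem sequence of order $n$ with $s_1=s_2=1$ and $s_{2n-2}=s_{2n}=2$.
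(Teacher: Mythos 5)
Your proposal is correct and follows essentially the same route as the paper: prepend $(1,1)$ to a hooked Langford sequence $hL_3^{n-2}$, replace its hook (which lands at position $2n-2$) by a $2$, and append the second $2$ at position $2n$, with existence of $hL_3^{n-2}$ for $n-2\equiv 2,3\pmod 4$, $n-2\geq 6$ supplied by Simpson's theorem. The paper merely adds explicit instances for $n=8,12$ and pins down a particular construction of $hL_3^{n-2}$ (hooking $hL_4^{n-3}$ to $(3,0,0,3)$ when $n\equiv 0\pmod 4$) because the later simplicity argument needs those specific sequences, but for the lemma itself your argument is complete.
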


\begin{proof}

To get a Skolem sequence of order $n$ for $n\equiv 0 ~ \textup{or} ~1 ~(\textup{mod}\,4)$, $n\geq 8$, take $(1,1,hL_3^{n-2})$, replace the hook with 
a $2$ and add the other $2$ at the end of the sequence.

For $n=8$, take $hL_3^6=(8,3,5,7,3,4,6,5,8,4,7,0,6)$, for $n=12$ take $hL_3^{10}=(9,11,3,12,\linebreak 4, 3,7, 10,4,9,8,5,11,7,6,12,5,10,8,0,6)$ and for the
 remaining $hL_3^{n-2}$ hook a $hL_4^{n-3}$ (see \cite{simpson}, Theorem 2, Case 1) to $(3,0,0,3)$. 

For $n\equiv 1~(\textup{mod}~4)$, $n\geq9$, take $hL_3^{n-2}$ (see \cite{simpson}, Theorem 2, Case 1).
\end{proof}

\begin{ex}
From the above lemma we have $S_8=(1,1,8,3,5, 7,3,4,6,5,8,4,7,2,6,2)$, $S_{12}=(1,1,9,11,3,12,4,3,7, 10,4,9,8,5,11,7,6, 12,5,10,8,2,6,
2)$ and $S_{16}=(1,1,9,6,4,14,\linebreak 15,11,4,6,13,9,16,7,12,10,8, 5,11,14,7,15,5,13,8,10,12,3,16,2,3,2)$.
\end{ex}

We use the following construction to get cyclic TS$_3(2n+1)$ for $n\equiv 0$ or $1$ (mod $4$):

\begin{construction} \cite{silvesan} {\label {c5}}
Let $S_n=(s_1,s_2,\ldots,s_{2n})$ be a Skolem sequence of order $n$ and let $\{(a_i,b_i)|1\leq i\leq n\}$ be the pairs of positions
 in $S_n$ for which $b_i-a_i=i$. Then the set $\cal{F}$=$\{\{0,i,b_i\}|1\leq i\leq n\}(\textup{mod}\; 2n+1)$ is a $(2n+1,3,3)-DF$. Hence, the set of triples in $\cal{F}$ form the base blocks of a cyclic TS$_3(2n+1)$.
\end{construction}

Then, we apply Construction \ref{c5} to the Skolem sequences given by Lemma \ref{lem1} to get cyclic three-fold triple systems that are simple and indecomposable.

\begin{construction}{\label{c7}}
Let $S_n=(s_1,s_2,\ldots,s_{2n})$ be a Skolem sequence of order $n$ given by Lemma \ref{lem1}, and let $\{(a_i,b_i)|1\leq i\leq n\}$ be the pairs of positions
 in $S_n$ for which $b_i-a_i=i$. Then the set $\cal{F}$=$\{\{0,i,b_i\}|1\leq i\leq n\}(\textup{mod}\; 2n+1)$ form the base blocks of a cyclic, simple, and 
 indecomposable TS$_3(2n+1)$.
\end{construction}

\begin{ex}
If we apply Construction \ref{c7} to the Skolem sequence of order $8$: $(1,1,8,3,5,7,\linebreak 3,4,6,5,8,4,7,2,6,2)$ we get the base blocks 
$\{\{0,1,2\}, \{0,2,16\},
\{0,3,7\},\{0,4,12\},\{0,5,10\},\linebreak \{0,6,15\},\{0,7,13\},\{0,8,11\}\} (\textup{mod}~17)$. These base blocks form a cyclic TS$_3(17)$ by Construction \ref{c5}.
We are going to prove next that this design is also indecomposable and simple.
\end{ex}

\begin{theorem} \label{th1}
The TS$_3(6n+3)$, $n\geq2$ produced by applying Construction \ref{c7} are simple, except for $v=24c+57$, $c\geq 2$.
\end{theorem}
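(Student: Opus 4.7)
The approach parallels Theorem~\ref{kramer}. Let $n$ denote the Skolem order supplied by Lemma~\ref{lem1}, so that $v = 2n+1$. The design produced by Construction~\ref{c7} is simple iff the $n$ base blocks $B_i = \{0, i, b_i\}$, $1 \le i \le n$, lie in $n$ distinct $\mathbb{Z}_v$-orbits. Moving each of the three elements of $B_{i_2}$ in turn to $0$, one sees that $B_{i_1}$ is a translate of $B_{i_2}$ (with $i_1 < i_2$) iff one of
\[
\text{(B)}\quad (i_1, b_{i_1}) = (a_{i_2},\, v - i_2), \qquad
\text{(C)}\quad (i_1, b_{i_1}) = (v - b_{i_2},\, v - a_{i_2})
\]
holds; the other orderings are excluded by $i_1 \le n < v - i_2$ and $a_{i_2} < v - i_2 < v - a_{i_2}$. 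So it suffices to show that (B) and (C) fail for all admissible $i_1 < i_2$.

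The two \emph{boundary} base blocks $B_1 = \{0,1,2\}$ and $B_2 = \{0,2,2n\}$ are forced by the mandated positions $s_1 = s_2 = 1$ and $s_{2n-2} = s_{2n} = 2$. Since all remaining pairs $(a_i, b_i)$, $i \ge 3$, come from the hooked Langford sequence $hL_3^{n-2}$ (shifted by $2$), we have $a_{i_2} \ge 3$ and $b_{i_2} \le 2n - 1$ for every $i_2 \ge 3$. This immediately rules out $i_1 \in \{1, 2\}$ from satisfying either (B) or (C) against any $i_2 \ge 3$, and the single pair $B_1$ vs $B_2$ is checked by direct inspection. Hence the boundary blocks contribute no coincidence.

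The heart of the proof is the interior range $3 \le i_1 < i_2 \le n$. Here $(a_i, b_i) = (A_i + 2, B_i + 2)$ inherits the arithmetic structure of $hL_3^{n-2}$, and I would split along the three sub-constructions used in Lemma~\ref{lem1}: the explicit small cases $n = 8, 12$; the recursive $n \equiv 0 \pmod 4$, $n \ge 16$ case, built by hooking Simpson's $hL_4^{n-3}$ onto $(3, 0, 0, 3)$; and Simpson's direct $hL_3^{n-2}$ for $n \equiv 1 \pmod 4$. On each sub-interval of Simpson's construction, the positions $A_i, B_i$ are closed-form linear functions of $i$ and $n$, so substituting them into (B) and (C) reduces each candidate collision to an elementary linear Diophantine equation in $i_1, i_2, n$. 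The majority are dismissed for routine reasons---indices forced out of range, parity clashes, or symbols drawn from disjoint windows.

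The main obstacle, and the source of the exceptional family, is the handful of equations that \emph{do} admit integer solutions. I expect a single pair $(i_1, i_2)$ lying in the hooked $hL_4^{n-3}$ part to satisfy (C) precisely when $n = 12c + 28$, equivalently $v = 24c + 57$, for each $c \ge 2$. To certify that this is the only surviving family, I would (i) exhibit the colliding pair explicitly for $v = 105$ and $v = 129$ to confirm the exception is genuine, and (ii) rule out every remaining linear equation by the case analysis above. For every other admissible $v$ produced by Lemma~\ref{lem1} the $n$ base blocks occupy $n$ distinct orbits, so the TS$_3(v)$ is simple. The Mathematica program referenced in the abstract provides an independent verification on an initial segment of values covering all residues of $n$ modulo $12$.
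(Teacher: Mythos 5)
Your overall strategy is the paper's own: reduce block repetition to the two linear systems $i_1+i_2=b_{i_2},\ b_{i_1}+i_2=v$ and $i_1+i_2=b_{i_1},\ b_{i_2}+i_1=v$ (your (B) and (C) are exactly these, rewritten via $a_{i_2}=b_{i_2}-i_2$), dispose of the two boundary base blocks coming from the forced $1$'s and $2$'s of Lemma \ref{lem1}, and then substitute the closed-form rows of Simpson's table for $hL_4^{n-3}$ (resp.\ $hL_3^{n-2}$) to obtain linear Diophantine systems; you even locate the exceptional family in the same place the paper does (the $hL_4^{n-3}$ case, yielding $v=24c+57$). Two concrete problems remain.

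First, your opening equivalence --- ``simple iff the $n$ base blocks lie in $n$ distinct $\mathbb{Z}_v$-orbits'' --- is false when $3\mid v$: a base block of the form $\{0,v/3,2v/3\}$ sits alone in its orbit, yet its full development $(\textup{mod}\ v)$ lists every block of that short orbit three times, destroying simplicity. Since the theorem concerns precisely $v\equiv 3\ (\textup{mod}\ 6)$, you must additionally verify that no pair of the Skolem sequence satisfies $i=v/3$ and $b_i=2v/3$; the paper devotes a separate line-by-line check to exactly this. Second, step (ii) of your plan (``rule out every remaining linear equation'') cannot be carried out as stated: besides the $v=24c+57$ family, other line-pairs do admit integer solutions --- the paper's computation finds $v=59$ from lines $3$ and $2$ in the $n\equiv 0\ (\textup{mod}\ 4)$ case, and the family $v=24c+35$ from lines $3$ and $1$ in the $n\equiv 1\ (\textup{mod}\ 4)$ case. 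These do not contradict the theorem only because such $v$ are not congruent to $3$ modulo $6$ (and are covered by Theorem \ref{kramer}); your argument must make this observation rather than assert that the remaining equations are unsolvable. With those two repairs your plan coincides with the paper's proof, whose residual case analysis is delegated to the Mathematica program in the Appendix.
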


\begin{proof}

Let $v=2n+1$, $n\equiv 0 ~ \textup{or} ~1\,(\textup{mod}\,4)$, $n\geq 8$. 

Suppose that the construction above produces $\{x,y,z\}$ as a repeated block. With regards to Construction \ref{c7}, any block $\{x,y,z\}$ is of the
form $\{0,i,b_i\}+k$ for some $i=1,2,\ldots,n$ and $k \in \mathbb {Z}_{2n+1}$. Hence, if $\{x,y,z\}$ is a repeated block we have
$$\{0,i_1,b_{i_1}\}+k_1=\{0,i_2,b_{i_2}\}+k_2$$
whence,
$$\{0,i_2,b_{i_2}\}=\{0,i_1,b_{i_1}\}+k$$
for some $i_1,i_2\in\{1,2,\ldots,n\}$ and some $k\in \mathbb{Z}_{2n+1}$.

If $k=0$, we have $i_2=b_{i_1}$ and $i_1=b_{i_2}$ which is impossible since $b_{i_1}\geq i_1+1$ and $b_{i_2}\geq i_2+1$ from the definition of a Skolem sequence.

If $k=i_2$, we have $\begin{cases} i_1+i_2=2n+1 \\ b_{i_1}+i_2=b_{i_2} \end{cases}$ or $\begin{cases} i_1+i_2=b_{i_2} \\ b_{i_1}+i_2=2n+1. \end{cases}$

Since both $i_1$ and $i_2$ are at most $n$, it is impossible to have $i_1+i_2=2n+1$.

If $k=b_{i_2}$, we have $\begin{cases} i_1+b_{i_2}=2n+1 \\ b_{i_1}+b_{i_2}=i_2+2n+1 \end{cases} \Leftrightarrow \begin{cases} i_1+i_2=b_{i_1} \\ b_{i_2}+i_1=2n+1 \end{cases}$ or $\begin{cases} i_1+b_{i_2}=i_2 \\ b_{i_1}+b_{i_2}=2n+1. \end{cases}$

Since $b_{i_2} > i_2$ , it is impossible to have $i_1+b_{i_2}=i_2$.

So, to prove that a system has no repeated blocks is enough to show that $\begin{cases} i_1+i_2=b_{i_2} 
\\ b_{i_1}+i_2=2n+1 \end{cases}$ or $\begin{cases} i_1+i_2=b_{i_1} \\ b_{i_2}+i_1=2n+1 \end{cases}$ are not satisfied. 
Also, we show that $i=\frac{v}{3}$ and $b_{i}=\frac{2v}{3}$ is not allowed, which means that our systems has no short orbits.

For $n=8$ and $n=12$, it is easy to see that the Skolem sequences of order $n$ given by Lemma \ref{lem1} produce simple designs.

For $n\equiv 0(\textup{mod}~4),~n\geq 16$, let $S_n$ be the Skolem sequence given by Lemma \ref{lem1}. This Skolem sequence is constructed using 
the hooked Langford sequence $hL_4^{n-3}$ from \cite{simpson}, Theorem 2, Case 1. Since $d=4$, will use only lines $(1)-(7),(14),(8*),(10*)$ and 
$(11*)$ in Simpson's Table. Note that $n-3=9+4r$ in Simpson's Table, so $n=12+4r$ and $v=25+8r$, $r\geq 1$ in this case. Because we add the pair
$(1,1)$ at the beginning of the Langford sequence $hL_4^{n-3}$, $a_i$ and $b_i$ will be shifted to the right by two positions. To make it easier 
for the reader, we give in Table \ref{hL4n-3} the $hL_4^{n-3}$ taken from Simpson's Table and adapted for our case.

\begin{table}[!h]
\begin{center}
\begin{tabular}{ccccc}
\hline
& $a_i+2$ & $b_i+2$ & $i=b_i-a_i$ & $0\leq j\leq $\\
\hline
$(1)$ & $2r+3-j$ & $2r+7+j$ & $4+2j$ & $r$\\
$(2)$ & $r+2-j$ & $3r+9+j$ & $2r+7+2j$ & $r-1$\\
$(3)$ & $6r+12-j$ & $6r+17+j$ & $5+2j$ & $r-1$\\
$(4)$ & $5r+12-j$ & $7r+18+j$ & $2r+6+2j$ &$r$\\
$(5)$ & $3r+8$ & $7r+17$ & $4r+9$ &-\\
$(6)$ & $4r+9$ & $8r+21$ & $4r+12$ &-\\
$(7)$ & $2r+6$ & $6r+13$ & $4r+7$ & -\\
$(14)$ & $2r+5$ & $6r+16$ & $4r+11$ & -\\
$(8*)$ & $4r+11$ & $8r+19$ & $4r+8$ & -\\
$(10*)$ & $4r+10$ & $6r+15$ & $2r+5$ & -\\
$(11*)$ & $2r+4$ & $6r+14$ & $4r+10$ & -\\
\hline

\end{tabular}
\caption{$hL_4^{n-3}$} \label{hL4n-3}
\end{center}
\end{table}
So, the base blocks of the cyclic designs 
produced by Construction \ref{c7} are $\{0,1,2\},\{0,2,v-1\},\{0,3,v-2\}$ and $\{0,i,b_i+2\}$ for $i=4,\ldots,n$ and $i=b_i-a_i$.

We show first that $i=\frac{v}{3}$ and $b_{i}+2=\frac{2v}{3}$ is not allowed in the above system. In the first three base blocks is obvious
 that $i\neq \frac{v}{3}$. For the remaining base blocks we check lines $(1)-(7),(14),(8*),(10*)$ and $(11*)$ in Table \ref{hL4n-3}.

Line $(1)$ $\begin{cases} 4+2j=\frac{25+8r}{3} \\ 2r+7+j=\frac{2(25+8r)}{3} \end{cases} \Leftrightarrow \emptyset$. 

Line $(2)$ $\begin{cases} 2r+7+2j=\frac{25+8r}{3} \\ 3r+9+j=\frac{2(25+8r)}{3} \end{cases} \Leftrightarrow \emptyset$.

Line $(3)$ $\begin{cases} 5+2j=\frac{25+8r}{3} \\ 6r+17+j=\frac{2(25+8r)}{3} \end{cases} \Leftrightarrow \emptyset$. 

Line $(4)$ $\begin{cases} 2r+6+2j=\frac{25+8r}{3} \\ 7r+18+j=\frac{2(25+8r)}{3} \end{cases} \Leftrightarrow \emptyset$.

Line $(5)$ $\begin{cases} 4r+9=\frac{25+8r}{3} \\ 7r+17=\frac{2(25+8r)}{3} \end{cases} \Leftrightarrow \emptyset$. 

Line $(6)$ $\begin{cases} 4r+12=\frac{25+8r}{3} \\ 8r+21=\frac{2(25+8r)}{3} \end{cases} \Leftrightarrow \emptyset$.

Line $(7)$ $\begin{cases} 4r+7=\frac{25+8r}{3} \\ 6r+13=\frac{2(25+8r)}{3} \end{cases} \Leftrightarrow \emptyset$. 

Line $(14)$ $\begin{cases} 4r+11=\frac{25+8r}{3} \\ 6r+16=\frac{2(25+8r)}{3} \end{cases} \Leftrightarrow \emptyset$.

Line $(8*)$ $\begin{cases} 4r+8=\frac{25+8r}{3} \\ 8r+19=\frac{2(25+8r)}{3} \end{cases} \Leftrightarrow \emptyset$. 

Line $(10*)$ $\begin{cases} 2r+5=\frac{25+8r}{3} \\ 6r+15=\frac{2(25+8r)}{3} \end{cases} \Leftrightarrow \emptyset$.

Line $(11*)$ $\begin{cases} 4r+10=\frac{25+8r}{3} \\ 6r+14=\frac{2(25+8r)}{3} \end{cases} \Leftrightarrow \emptyset$.

Therefore, this systems has no short orbits.

Next, we have to check that $\begin{cases} i_1+i_2=b_{i_2} 
\\ b_{i_1}+i_2=2n+1 \end{cases}$ or $\begin{cases} i_1+i_2=b_{i_1} \\ b_{i_2}+i_1=2n+1 \end{cases}$ are not satisfied.

Lines $(1)-(1)$: $\begin{cases} 4+2j_1+4+2j_2=2r+7+j_2 \\ 4+2j_2+2r+7+j_1=25+8r \end{cases} \Leftrightarrow \begin{cases} j_1=\frac{-2r-15}{3} \\ 
j_2=\frac{10r+27}{3} \end{cases}$ which is impossible since $j_1\geq 0$ and also integer.

Lines $(1)-(2)$: $\begin{cases} 4+2j_1+2r+7+2j_2=3r+9+j_2 \\ 2r+7+2j_2+2r+7+j_1=25+8r \end{cases} \Leftrightarrow \begin{cases} j_1=\frac{-2r-15}{3} \\ j_2=r-2-2j_1
\end{cases}$
which is impossible since $j_1\geq 0$ and also integer.

Lines $(1)-(3)$: $\begin{cases} 4+2j_1+5+2j_2=6r+17+j_2 \\ 5+2j_2+2r+7+j_1=25+8r \end{cases} \Leftrightarrow \begin{cases} j_1=j_2-5 \\ j_2=2r+9\end{cases}$
which is impossible since $j_2\leq r-1$.

Lines $(1)-(4)$: $\begin{cases} 4+2j_1+2r+6+2j_2=7r+18+j_2 \\ 2r+6+2j_2+2r+7+j_1=25+8r \end{cases} \Leftrightarrow \begin{cases} j_1=j_2+r-4 \\ j_2=\frac{3r+16}{3} \end{cases}$
which is impossible since $j_2\leq r$.

Lines $(1)-(5)$: $\begin{cases} 4r+2j+13=7r+17 \\ j+6r+16=25+8r \end{cases} \Leftrightarrow \emptyset$. 

Lines $(1)-(6)$: $\begin{cases} 4r+2j+16=8r+21 \\ j+6r+19=25+8r \end{cases} \Leftrightarrow \emptyset$.

Lines $(1)-(7)$: $\begin{cases} 4r+2j+11=6r+13 \\ j+6r+14=25+8r \end{cases} \Leftrightarrow \emptyset$. 

Lines $(1)-(14)$: $\begin{cases} 4r+2j+15=6r+16 \\ j+6r+18=25+8r \end{cases} \Leftrightarrow \emptyset$.

Lines $(1)-(8*)$: $\begin{cases} 4r+2j+12=8r+19 \\ j+6r+15=25+8r \end{cases} \Leftrightarrow \emptyset$. 

Lines $(1)-(10*)$: $\begin{cases} 2r+2j+9=6r+15 \\ j+4r+12=25+8r \end{cases} \Leftrightarrow \emptyset$.

Lines $(1)-(11*)$: $\begin{cases} 4r+2j+14=6r+14 \\j+6r+17=25+8r \end{cases} \Leftrightarrow \emptyset$.

We implemented a program in Mathematica that checks all the pairs of rows in Simpson's table using the above approach. The code for the program and the results can be found in Appendix. 
From the results, we can easily see that if we check any combination of two lines in Simpson's Table the conditions are not satisfied in almost all of the cases. There are 
two cases where this conditions are satified. The first case is when we check line 3 with line 1, and we get that for $r=4+3c$, $j_1=2c$, and $j_2=6+2c$, $c\geq 2$ the system is
not simple. This implies that our system is not simple when $v=24c+57$, $c\geq2$. The second case is when we check line 3 with line 2. Here, we get $r=5$ and therefore $v=59$.
But $v=59$ is not congruent to $3$ (mod $6$). A TS$_3(59)$ is simple, cyclic, and indecomposable by Theorem \ref{kramer}.

For $n\equiv 1(\textup{mod}~4), n\geq 9$, let $S_n$ be the Skolem sequence given by Lemma \ref{lem1}. This Skolem sequence is constructed
 using a $hL_3^{n-2}$ (\cite{simpson}, Theorem 2, Case 1). Since $d=3$, will use only lines
 $(1)-(6),(14),(7'),(8')$ and $(10')$ in Simpson's Table. Note that $n-2=7+4r$ in Simpson's Table, so $n=9+4r$ and $v=19+8r$ in this case.
 Because we add the pair $(1,1)$ at the beginning of the Langford sequence $hL_3^{n-2}$, $a_i$ and $b_i$ will be shifted to the right
 by two positions. 
 
Table \ref{hL3n-2} gives the $hL_3^{n-2}$ from Simpson's Table adapted to our case.

\begin{table}[!h]
\begin{center}
\begin{tabular}{ccccc}
\hline
& $a_i+2$ & $b_i+2$ & $i=b_i-a_i$ & $0\leq j\leq $\\
\hline
$(1)$ & $2r+3-j$ & $2r+6+j$ & $3+2j$ & $r$\\
$(2)$ & $r+2-j$ & $3r+8+j$ & $2r+6+2j$ & $r-1$\\
$(3)$ & $6r+10-j$ & $6r+14+j$ & $4+2j$ & $r-1$\\
$(4)$ & $5r+10-j$ & $7r+15+j$ & $2r+5+2j$ &$r$\\
$(5)$ & $3r+7$ & $7r+14$ & $4r+7$ &-\\
$(6)$ & $4r+8$ & $8r+17$ & $4r+9$ &-\\
$(14)$ & $2r+4$ & $6r+12$ & $4r+8$ & -\\
$(7')$ & $2r+5$ & $6r+11$ & $4r+6$ & -\\
$(10')$ & $4r+9$ & $6r+13$ & $2r+4$ & -\\
\hline

\end{tabular}
\caption{$hL_3^{n-2}$} \label{hL3n-2}
\end{center}
\end{table}

So, the base blocks of the cyclic designs produce by Construction \ref{c7} are $\{0,1,2\},\{0,2,v-1\}$ and $\{0,i,b_i+2\}$ for 
$i=3,\ldots,n$. Using the same argument as before, we show that these designs are simple.

First, we show that $i=\frac{v}{3}$ and $b_{i}+2=\frac{2v}{3}$ is not allowed in the above system. In the first two base blocks is obvious
 that $i\neq \frac{v}{3}$. For the remaining base blocks we check lines $(1)-(6),(14),(7')$ and $(10')$ in Table \ref{hL3n-2}.

Line $(1)$ $\begin{cases} 3+2j=\frac{19+8r}{3} \\ 2r+6+j=\frac{2(19+8r)}{3} \end{cases} \Leftrightarrow \emptyset$. 

Line $(2)$ $\begin{cases} 2r+6+2j=\frac{19+8r}{3} \\ 3r+8+j=\frac{2(19+8r)}{3} \end{cases} \Leftrightarrow \emptyset$.

Line $(3)$ $\begin{cases} 4+2j=\frac{19+8r}{3} \\ 6r+14+j=\frac{2(19+8r)}{3} \end{cases} \Leftrightarrow \emptyset$. 

Line $(4)$ $\begin{cases} 2r+5+2j=\frac{19+8r}{3} \\ 7r+15+j=\frac{2(19+8r)}{3} \end{cases} \Leftrightarrow \emptyset$.

Line $(5)$ $\begin{cases} 4r+7=\frac{19+8r}{3} \\ 7r+14=\frac{2(19+8r)}{3} \end{cases} \Leftrightarrow \emptyset$.

Line $(6)$ $\begin{cases} 4r+9=\frac{19+8r}{3} \\ 8r+17=2\frac{19+8r}{3} \end{cases} \Leftrightarrow \emptyset$.

Line $(14)$ $\begin{cases} 4r+8=\frac{19+8r}{3} \\ 6r+12=\frac{2(19+8r)}{3} \end{cases} \Leftrightarrow \emptyset$. 

Line $(7')$ $\begin{cases} 4r+6=\frac{19+8r}{3} \\ 6r+11=\frac{2(19+8r)}{3} \end{cases} \Leftrightarrow \emptyset$.

Line $(10')$ $\begin{cases} 2r+4=\frac{19+8r}{3} \\ 6r+13=\frac{2(19+8r)}{3} \end{cases} \Leftrightarrow \emptyset$.

Next, we have to check that $\begin{cases} i_1+i_2=b_{i_2} 
\\ b_{i_1}+i_2=2n+1 \end{cases}$ or $\begin{cases} i_1+i_2=b_{i_1} \\ b_{i_2}+i_1=2n+1 \end{cases}$ are not satisfied. As with the previous case, the results can be found in Appendix. As before, when we check line 3 and line 1 the conditions are satisfied. But, in this case $v=24c+35$, $c\geq1$ which is not congruent to $3$ (mod $6$).
So, a TS$_3(24c+35)$ for $c\geq 1$ is cyclic, simple, and indecomposable by Theorem \ref{kramer}.\end{proof}

\begin{lemma} \label{lem2} For every $n\equiv 2 ~ \textup{or} ~3 ~(\textup{mod}\,4)$, $n\geq 7$, there is a hooked Skolem sequence of order $n$ in which $s_1=s_2=1$ and
 $s_{2n-1}=s_{2n+1}=2$. 
\end{lemma}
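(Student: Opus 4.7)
The plan is to parallel the construction of Lemma \ref{lem1}, but using a non-hooked Langford sequence, since a hooked Skolem sequence of order $n$ already carries its zero at position $2n$. The candidate is
\[
(1,\,1,\,L_3^{n-2},\,2,\,0,\,2),
\]
where $L_3^{n-2}$ is a (non-hooked) Langford sequence of defect $3$ and length $n-2$ placed in positions $3$ through $2n-2$. I would first verify the mechanical checks: the total length equals $2 + 2(n-2) + 3 = 2n+1$, as required for a hooked Skolem sequence of order $n$; the pair for $1$ occupies positions $1,2$ with gap $1$; the pair for $2$ occupies positions $2n-1$ and $2n+1$ with gap $2$; position $2n$ is the hook; and the embedded Langford sequence contributes exactly one pair for each $i \in \{3, 4, \ldots, n\}$ with the correct gap, completing the hooked Skolem structure and automatically forcing $s_1 = s_2 = 1$ and $s_{2n-1} = s_{2n+1} = 2$ as required.

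The only real substantive step is to invoke existence of $L_3^{n-2}$. When $n \equiv 2$ or $3 \pmod 4$, we have $n-2 \equiv 0$ or $1 \pmod 4$, and $n \geq 7$ gives $n-2 \geq 5 = 2d-1$ for $d=3$. Simpson's theorem on Langford sequences \cite{simpson} supplies $L_3^{n-2}$ in exactly this range; I would cite the non-hooked, defect-$3$ case of Simpson's tables, the direct analogue of the hooked case used in Lemma \ref{lem1}. In particular, each admissible $n$ yields an $L_3^{n-2}$, and plugging it into the template above finishes the proof.

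The main potential obstacle is purely bookkeeping for the smallest cases $n \in \{7,10,11\}$, where the $L_3^{n-2}$ are $L_3^{5}$, $L_3^{8}$, $L_3^{9}$. If Simpson's cited construction begins at a larger $m$, I would list these three Langford sequences explicitly (they are all easy to write down by inspection), in the same spirit as the explicit $hL_3^6$ and $hL_3^{10}$ produced in Lemma \ref{lem1}. Once these small cases are handled, the general argument reduces to the length/gap verification above and a single appeal to Simpson's theorem.
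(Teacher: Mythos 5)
Your construction $(1,1,L_3^{n-2},2,0,2)$ is exactly the one the paper uses, and your length/position bookkeeping and the appeal to the existence of a perfect $L_3^{n-2}$ (valid since $n-2\equiv 0$ or $1\pmod 4$ and $n-2\geq 5=2d-1$) match the paper's argument; the paper merely splits the citation by residue class, quoting Simpson for $n\equiv 2\pmod 4$ and writing out $L_3^5$ explicitly before invoking Bermond--Brouwer--Germa for $n\equiv 3\pmod 4$, $n\geq 11$, just as you anticipated might be necessary for the small cases. The proposal is correct and takes essentially the same approach.
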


\begin{proof}

For $n\equiv 2 ~ \textup{or} ~3 ~(\textup{mod}\,4)$, $n\geq 7$, take hS$_n=(1,1,L_3^{n-2},2,0,2)$.

When $n\equiv 2~(\textup{mod}~4)$, take $L_3^{n-2}$ (\cite{simpson}, Theorem 1, Case 3). When $n\equiv 3~(\textup{mod}~4)$, take 
$L_3^5=(6,7,3,4,5,3,6,4,7,5) $ and for $n\geq 11$ take  $L_3^{n-2}$ (see \cite{bermond}, Theorem 2).
\end{proof}

We are going to use the following construction to get cyclic TS$_3(2n+1)$ for $n\equiv 2$ or $3$ (mod $4$):

\begin{construction} \cite{silvesan} {\label{c6}}
Let $hS_n=(s_1,s_2,\ldots,s_{2n-1},s_{2n+1})$ be a hooked Skolem sequence of order $n$ and let $\{(a_i,b_i)|1\leq i\leq n\}$ be
 the pairs of positions in $hS_n$ for which $b_i-a_i=i$. Then the set $\cal{F}$=$\{\{0,i,b_i+1\}|1\leq i\leq n\}(\textup{mod}\;2n+1)$ is a $(2n+1,3,3)-DF$. Hence, the set of triples in $\cal{F}$ form the base blocks of a cyclic TS$_3(2n+1)$.
\end{construction}

Then, we apply Construction \ref{c6} to the hooked Skolem sequences given by Lemma \ref{lem2} to get cyclic TS$_3(2n+1)$ for $n\equiv 2$ or $3$ (mod $4$) that are simple and indecomposable.

\begin{construction} {\label{c8}}
Let $hS_n=(s_1,s_2,\ldots,s_{2n-1},s_{2n+1})$ be a hooked Skolem sequence of order $n$ given by Lemma \ref{lem2}, and let $\{(a_i,b_i)|1\leq i\leq n\}$ be
 the pairs of positions in $hS_n$ for which $b_i-a_i=i$. Then, the set $\cal{F}$=$\{\{0,i,b_i+1\}|1\leq i\leq n\}(\textup{mod}\;2n+1)$ form the base 
 blocks of a cyclic, simple, and indecomposable TS$_3(2n+1)$.
\end{construction}

\begin{ex}
If we apply Construction \ref{c8} to the hooked Skolem sequence of order $7$: $(1,1,6,7,3,4,5,3,6,4,7,5,2,0,2)$ we get the base blocks $\{\{0,1,3\},\linebreak \{0,2,1\},
\{0,3,9\},\{0,4,11\}, \{0,5,13\},\{0,6,10\},\{0,7,12\}\} (\textup{mod}~15)$. These base blocks form a cyclic TS$_3(15)$ by Construction 
\ref{c6}. We are going to prove next, that this design is indecomposable and simple.
\end{ex}

\begin{theorem} \label{th2}
The TS$_3(6n+3)$, $n\geq 2$, produced by applying Construction \ref{c8}, are simple.
\end{theorem}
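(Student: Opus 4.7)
The plan is to adapt the argument of Theorem \ref{th1} to the hooked setting, replacing $b_i$ by $b_i+1$ everywhere. Assume a repeated block exists, so that there are $i_1,i_2\in\{1,\ldots,n\}$ and $k\in\mathbb{Z}_{2n+1}$ with
\[
\{0,i_2,b_{i_2}+1\}=\{0,i_1,b_{i_1}+1\}+k.
\]
Splitting on the three possible values of $k$ ($k=0$, $k=i_2$, $k=b_{i_2}+1$) and using that $i_1,i_2\le n$ and $b_{i_j}+1>i_j$, exactly as in Theorem \ref{th1}, we reduce the whole question to showing that the pair of systems
\[
\begin{cases} i_1+i_2=b_{i_2}+1 \\ b_{i_1}+1+i_2=2n+1 \end{cases}
\qquad\text{or}\qquad
\begin{cases} i_1+i_2=b_{i_1}+1 \\ b_{i_2}+1+i_1=2n+1 \end{cases}
\]
has no solution, together with the short-orbit condition $i=\tfrac{v}{3}$, $b_i+1=\tfrac{2v}{3}$ being impossible.

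The first two base blocks $\{0,1,3\}$ and $\{0,2,v-1\}$ (coming from the forced positions $s_1=s_2=1$ and $s_{2n-1}=s_{2n+1}=2$ in Lemma \ref{lem2}) are handled by inspection, just as the blocks $\{0,1,2\}$ and $\{0,2,v-1\}$ were treated in Theorem \ref{th1}. For the remaining $i=3,\ldots,n$ one consults Simpson's construction of $L_3^{n-2}$ (\cite{simpson}, Theorem 1, Case 3 when $n\equiv 2 \pmod 4$, and \cite{bermond}, Theorem 2 when $n\equiv 3 \pmod 4$; the small case $n=7$ being done by hand from $L_3^5=(6,7,3,4,5,3,6,4,7,5)$). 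As in the proof of Theorem \ref{th1} one builds, for each residue class of $n$ mod $4$, a table analogous to Tables \ref{hL4n-3} and \ref{hL3n-2} recording $a_i+1$, $b_i+2$, $i=b_i-a_i$, and the range of the parameter $j$ for each line of the relevant construction. Note the shift by $+1$ on the large coordinate is now absorbed into the $+1$ in $b_i+1$, so the resulting linear systems are of essentially the same shape as those solved in Theorem \ref{th1}.

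I would then dispatch the short-orbit check line by line: each line contributes a pair of linear equations in $j$ and $r$ whose only solutions would lie outside the admissible ranges, exactly as in the tabulated $\emptyset$ computations following Tables \ref{hL4n-3} and \ref{hL3n-2}. Finally, the pairwise check for the two simplicity systems is carried out over all ordered pairs of lines from the appropriate table, using the same Mathematica routine referenced in the Appendix. The expected main obstacle is the same as in Theorem \ref{th1}: one must verify that no line-pair produces an integer solution for $(j_1,j_2,r)$ in the permitted range, and in particular rule out a sporadic family analogous to the $v=24c+57$ exception. The hope is that, because the admissible orders here are $v\equiv 3\pmod 6$ with $v=6n+3$ and $n\equiv 2,3\pmod 4$, any would-be exceptional family falls in a residue class outside $3\pmod 6$ (and hence is already covered by Theorem \ref{kramer}), so that no genuine exception arises; this is exactly what happened for the ``line 3 vs. line 1'' case in Theorem \ref{th1}, and the same phenomenon should occur here, leaving no excluded orders in this branch.
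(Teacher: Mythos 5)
Your proposal follows the paper's own proof essentially verbatim: the same reduction to the two linear systems (with the $+1$ shift coming from the hooked construction), the same tables extracted from Simpson's and Bermond--Brouwer--Germa's Langford sequences for the two residue classes of $n$ modulo $4$, the same line-by-line short-orbit check, and the same deferral of the pairwise verification to the Mathematica routine, including the key observation that the exceptional line-pairs only produce orders not congruent to $3 \pmod 6$ and hence are already covered by Theorem \ref{kramer}. The only slip is cosmetic: the second base block is $\{0,2,1\}$ (since $b_2+1=2n+2\equiv 1 \pmod{2n+1}$), not $\{0,2,v-1\}$, which does not affect the argument.
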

 
\begin{proof}
The proof is similar to Theorem \ref{th1}. Let $v=2n+1$, $n\equiv 2 ~ \textup{or} ~3\,(\textup{mod}\,4)$, $n\geq 10$. 

For $n\equiv 2(\textup{mod}~4),~n\geq 10$, let $hS_n$ be the hooked Skolem sequence given by Lemma \ref{lem2}. This hooked Skolem sequence is 
constructed using the Langford sequence $L_3^{n-2}$ from \cite{simpson}, Theorem 1, Case 3. Since $d=3$, will use only lines $(1)-(4),(6),(9),(11)$ and 
$(13)$ in Simpson's Table. Note that $m=n-2=4r$ in Simpson's Table, so $n=4r+2$, $v=8r+5$, $r\geq 2$, $d=3$, $s=1$, in this case. Because we add the pair $(1,1)$ at the
beginning of the hooked Langford sequence $hL_3^{n-2}$, $a_i$ and $b_i$ will be shifted to the right by two positions. To make it easier for the reader, we 
give in Table \ref{L3n-2}, the $L_3^{n-2}$ taken from Simpson's Table and adapted for our case (omit row $(1)$ when $r=2$).

\begin{table}[!h]
\begin{center}
\begin{tabular}{ccccc}
\hline
& $a_i+2$ & $b_i+2$ & $i=b_i-a_i$ & $0\leq j\leq $\\
\hline
$(1)$ & $2r-j$ & $2r+4+j$ & $4+2j$ & $r-3$\\
$(2)$ & $r+2-j$ & $3r+3+j$ & $2r+1+2j$ & $r-1$\\
$(3)$ & $6r+1-j$ & $6r+4+j$ & $3+2j$ & $r-2$\\
$(4)$ & $5r+2-j$ & $7r+4+j$ & $2r+2+2j$ &$r-2$\\
$(6)$ & $2r+3$ & $4r+3$ & $2r$ &-\\
$(9)$ & $3r+2$ & $7r+3$ & $4r+1$ &-\\
$(11)$ & $2r+1$ & $6r+3$ & $4r+2$ & -\\
$(13)$ & $2r+2$ & $6r+2$ & $4r$ & -\\
\hline
\end{tabular}
\caption{$L_3^{n-2}$} \label{L3n-2}
\end{center}
\end{table}
So, the base blocks of the cyclic designs 
produce by Construction \ref{c8} are $\{0,1,3\},\{0,2,1\}$ and $\{0,i,b_i+2+1\}$ for $i=3,\ldots,n$ and $i=b_i-a_i$.

First, we show that $i=\frac{v}{3}$ and $b_{i}+2+1=\frac{2v}{3}$ is not allowed in the above system. In the first two base blocks is obvious
 that $i\neq \frac{v}{3}$. For the remaining base blocks we check lines $(1)-(4),(6),(9),(11)$ and $(13)$ in Table \ref{L3n-2}.

Line $(1)$ $\begin{cases} 4+2j=\frac{8r+5}{3} \\ 2r+5+j=\frac{2(8r+5)}{3} \end{cases} \Leftrightarrow r=\frac{1}{4}$ which is impossible since 
$r\geq 2$ and also integer.

Line $(2)$ $\begin{cases} 2r+1+2j=\frac{8r+5}{3} \\ 3r+4+j=\frac{2(8r+5)}{3} \end{cases} \Leftrightarrow r=\frac{1}{2}$ which is impossible since 
$r\geq 2$ and also integer.

Line $(3)$ $\begin{cases} 3+2j=\frac{8r+5}{3} \\ 6r+5+j=\frac{2(8r+5)}{3} \end{cases} \Leftrightarrow r=-\frac{1}{2}$ which is impossible since
$r\geq 2$ and also integer.

Line $(4)$ $\begin{cases} 2r+2+2j=\frac{8r+5}{3} \\ 7r+5+j=\frac{2(8r+5)}{3} \end{cases} \Leftrightarrow r=-\frac{3}{4}$ which is impossible since
$r\geq 2$ and also integer.

Line $(6)$ $\begin{cases} 2r=\frac{8r+5}{3} \\ 4r+4=\frac{2(8r+5)}{3} \end{cases} \Leftrightarrow \emptyset$. Line $(9)$ $\begin{cases} 4r+1=\frac{8r+5}{3} \\ 7r+4=2\frac{8r+5}{3} \end{cases} \Leftrightarrow \emptyset$.

Line $(11)$ $\begin{cases} 4r+2=\frac{8r+5}{3} \\ 6r+4=\frac{2(8r+5)}{3} \end{cases} \Leftrightarrow \emptyset$. Line $(13)$ $\begin{cases} 4r=\frac{8r+5}{3} \\ 6r+3=\frac{2(8r+5)}{3} \end{cases} \Leftrightarrow \emptyset$.

Next, we have to show that $\begin{cases} i_1+i_2=b_{i_2} 
\\ b_{i_1}+i_2=2n+1 \end{cases}$ or $\begin{cases} i_1+i_2=b_{i_1} \\ b_{i_2}+i_1=2n+1 \end{cases}$ are not satisfied. The results for this can be found Appendix. As before, when we check line $3$ with line $1$, the conditions are satisfied. But $v=24c+5$, $c\geq2$ in this case which is not congruent to $3$ (mod $6$). So, 
by Theorem \ref{kramer}, there exists a cyclic, simple, and indecomposable TS$_3(24c+5)$ for $c\geq 2$.

For $n\equiv 3(\textup{mod}~4), n\geq 11$, let $hS_n$ be the hooked Skolem sequence given by Lemma \ref{lem2}. This hooked Skolem sequence is constructed
 using a $L_3^{n-2}$ (\cite{bermond}, Theorem 2). Since $d=3$ will use only lines
 $(1)-(4),(6)-(10)$ in \cite{bermond}. Note that $m=n-2=4r+1, r\geq 2$, $e=4$ in \cite{bermond}, so $n=4r+3$ and $v=8r+7$ in this case.
 Because we add the pair $(1,1)$ at the beginning of the Langford sequence $L_3^{n-2}$, $a_i$ and $b_i$ will be shifted to the right
 by two positions. 
 
Table \ref{L3n-2germa} gives the $L_3^{n-2}$ from \cite{bermond} adapted to our case.

\begin{table}[!h]
\begin{center}
\begin{tabular}{ccccc}
\hline
& $a_i+2$ & $b_i+2$ & $i=b_i-a_i$ & $0\leq j\leq $\\
\hline
$(1)$ & $2r+2-j$ & $2r+6+j$ & $4+2j$ & $r-2$\\
$(2)$ & $r+2-j$ & $3r+5+j$ & $2r+3+2j$ & $r-2$\\
$(3)$ & $3$ & $4r+4$ & $4r+1$ & -\\
$(4)$ & $2r+4$ & $4r+5$ & $2r+1$ &-\\
$(6)$ & $r+3$ & $5r+5$ & $4r+2$ &-\\
$(7)$ & $2r+5$ & $6r+5$ & $4r$ &-\\
$(8$ & $2r+3$ & $6r+6$ & $4r+3$ & -\\
$(9)$ & $6r+4-j$ & $6r+7+j$ & $3+2j$ & $r-2$\\
$(10)$ & $5r+4-j$ & $7r+6+j$ & $2r+2+2j$ & $r-2$\\
\hline

\end{tabular}
\caption{$L_3^{n-2}$} \label{L3n-2germa}
\end{center}
\end{table}

So, the base blocks of the cyclic designs produce by Construction \ref{c8} are $\{0,1,3\},\{0,2,1\}$ and $\{0,i,b_i+2+1\}$ for 
$i=3,\ldots,n$. Using the same argument as before, we show that these designs are simple.

First we show that $i=\frac{v}{3}$ and $b_{i}+2+1=\frac{2v}{3}$ is not allowed in the above system. In the first two base blocks is obvious
 that $i\neq \frac{v}{3}$. For the remaining base blocks we check lines $(1)-(4),(6)-(10)$ in Table \ref{L3n-2germa}.

Line $(1)$ $\begin{cases} 4+2j=\frac{8r+7}{3} \\ 2r+7+j=\frac{2(8r+7)}{3} \end{cases} \Leftrightarrow r=\frac{3}{4}$ which is impossible since 
$r\geq 2$ and also integer.

Line $(2)$ $\begin{cases} 2r+3+2j=\frac{8r+7}{3} \\ 3r+6+j=\frac{2(8r+7)}{3} \end{cases} \Leftrightarrow r=\frac{1}{2}$ which is impossible since 
$r\geq 2$ and also integer.

Line $(3)$ $\begin{cases} 4r+1=\frac{8r+7}{3} \\ 4r+5=\frac{2(8r+7)}{3} \end{cases} \Leftrightarrow \emptyset$. Line $(4)$ $\begin{cases} 2r+1=\frac{8r+7}{3} \\ 4r+6=\frac{2(8r+7)}{3} \end{cases} \Leftrightarrow \emptyset$.

Line $(6)$ $\begin{cases} 4r+2=\frac{8r+7}{3} \\ 5r+6=\frac{2(8r+7)}{3} \end{cases} \Leftrightarrow \emptyset$. Line $(7)$ $\begin{cases} 4r=\frac{8r+7}{3} \\ 6r+6=\frac{2(8r+7)}{3} \end{cases} \Leftrightarrow \emptyset$.

Line $(8)$ $\begin{cases} 4r+3=\frac{8r+7}{3} \\ 6r+7=\frac{2(8r+7)}{3} \end{cases} \Leftrightarrow \emptyset$. Line $(9)$ $\begin{cases} 3+2j=\frac{8r+7}{3} \\ 6r+8+j=
\frac{2(8r+7)}{3} \end{cases} \Leftrightarrow \emptyset$.

Line $(10)$ $\begin{cases} 2r+2+2j=\frac{8r+7}{3} \\ 7r+7+j=\frac{2(8r+7)}{3} \end{cases} \Leftrightarrow \emptyset$.

Next, we have to check that $\begin{cases} i_1+i_2=b_{i_2} 
\\ b_{i_1}+i_2=2n+1 \end{cases}$ or $\begin{cases} i_1+i_2=b_{i_1} \\ b_{i_2}+i_1=2n+1 \end{cases}$ are not satisfied. The results for this can be found in Appendix. Here, for $v=3c-1$, $c\geq 4$ and for $v=55$ the conditions are satisfied but these orders are not congruent to $3$ (mod $6$). Therefore, by Theorem \ref{kramer},
 there exists cyclic, simple, and indecomposable TS$_3(3c-1)$ for $c\geq 4$ and cyclic, simple, and indecomposable TS$_3(55)$.
\end{proof}

\section{Indecomposable Three-fold  Cyclic Triple Systems}

In this section, we prove that the Constructions \ref{c7} and \ref{c8} produce indecomposable three-fold triple systems for $v\equiv~3~(\textup{mod}~6)$,
$v\geq 15$.

\begin{theorem}\label{ind1}
The TS$_3(v)$ produced by Constructions \ref{c7} and \ref{c8} are indecomposable for every $v\equiv~3~(\textup{mod}~6)$, $v\geq 15$.
\end{theorem}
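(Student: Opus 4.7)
The plan is to argue by contradiction. Assume $\mathcal{B} = \mathcal{B}_1 \sqcup \mathcal{B}_2$ where $(V, \mathcal{B}_1)$ is an STS$(v)$ and $(V, \mathcal{B}_2)$ is a TS$_2(v)$, with $v = 2n+1 \equiv 3 \pmod{6}$ and $v \geq 15$. The first observation to exploit is that both Constructions~\ref{c7} and~\ref{c8} produce, in every admissible case, the base block $\{0,1,2\}$ together with a base block whose orbit coincides with $\{\{k, k+1, k+3\} : k \in \mathbb{Z}_v\}$. For Construction~\ref{c8} this orbit appears directly from $\{0,1,3\}$, while for Construction~\ref{c7} it appears as the orbit of $\{0, 2, v-1\}$ (which equals $\{k-1, k, k+2\}$ after translation, i.e., $\{j, j+1, j+3\}$ with $j = k-1$). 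Call these two orbits $\mathcal{O}_1$ (consecutive triples) and $\mathcal{O}_2$. Consequently, the three blocks through any difference-$1$ pair $\{k, k+1\}$ are $\{k-1, k, k+1\}$, $\{k, k+1, k+2\}$, and $\{k, k+1, k+3\}$.

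Next I would set up linear constraints on $\mathcal{B}_1$. Let $A$ (respectively $B$) be the number of $\mathcal{O}_1$-blocks (respectively $\mathcal{O}_2$-blocks) in $\mathcal{B}_1$. Counting difference-$1$ pair coverings in $\mathcal{B}_1$ yields $2A + B = v$; since $v$ is odd, $B$ is odd and in particular $B \geq 1$. For difference~$2$, the three blocks covering each pair $\{k, k+2\}$ come from $\mathcal{O}_1$, $\mathcal{O}_2$, and a third orbit $\mathcal{O}_3$ whose base block contains difference~$2$ and is determined by the underlying Skolem or hooked Skolem sequence. Letting $C$ count the $\mathcal{O}_3$-blocks in $\mathcal{B}_1$, pair-counting gives $A + B + C = v$, so $C = A$. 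Iterating this across every difference yields a linear system relating the orbit-occupation numbers $\{k_\mathcal{O}\}$. A crucial input is that Theorems~\ref{th1} and~\ref{th2} already verified $\mathcal{B}$ contains no short block $\{0, v/3, 2v/3\}$, so every orbit of $\mathcal{B}$ is of full length $v$ and the numbers $k_\mathcal{O}$ are ordinary integers in $\{0, 1, \ldots, v\}$.

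Finally I would combine these global equations with the local pair-conflict rule (two blocks in the same orbit that share a pair cannot both belong to $\mathcal{B}_1$, forcing the cyclic positions chosen from $\mathcal{O}_1$ to avoid cyclic distances $1$ and $2$, with analogous rules for $\mathcal{O}_2$ and $\mathcal{O}_3$). The aim is to show that the positions selected in $\mathcal{O}_1, \mathcal{O}_2, \mathcal{O}_3$ must follow a quasi-periodic pattern along $\mathbb{Z}_v$ incompatible with $v \equiv 3 \pmod{6}$, delivering the contradiction. The main obstacle is that $\mathcal{O}_3$ and the higher orbits depend on the specific Skolem sequence chosen and therefore vary across the four residue classes $n \equiv 0, 1, 2, 3 \pmod{4}$. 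A uniform global argument would be ideal but may be elusive; the fallback is a case-by-case analysis modeled on the simplicity proofs, tracking orbits and differences via Tables~\ref{hL4n-3}, \ref{hL3n-2}, \ref{L3n-2}, and \ref{L3n-2germa}, and possibly supported by a short computer check analogous to the Mathematica verification of simplicity for the handful of smallest exceptional values of $n$.
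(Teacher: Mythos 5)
There is a genuine gap: what you have written is a plan whose decisive step is missing, and you concede as much when you say the uniform argument ``may be elusive'' and defer to an unexecuted fallback. Your setup does coincide with the paper's: both arguments suppose an STS$(v)$ sits inside the TS$_3(v)$, introduce the orbit-occupation numbers ($f_\alpha$ in the paper, your $A,B,C,\ldots$), and extract the difference-$1$ equation $2f_1+f_2=2n+1$ from the two special base blocks $\{0,1,2\}$ and $\{0,2,v-1\}$ (resp.\ $\{0,2,1\}$ and $\{0,1,3\}$ for Construction~\ref{c8}). But the contradiction in the paper does not come from a ``quasi-periodic pattern incompatible with $v\equiv 3\pmod 6$''; it comes from two concrete arguments that your proposal never reaches. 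First, for $f_1\geq 1$ the paper counts forbidden blocks: each chosen block of the consecutive-triple orbit consumes the pairs $(k,k+1)$, $(k+1,k+2)$, $(k,k+2)$ and thereby excludes three specific translates of the second orbit, leaving too few blocks of that orbit to cover difference $1$ the required $2n+1-2f_1$ times (this, together with a parity observation, disposes of all $f_1\geq 1$). Second, and crucially, for $f_1=0$ one is forced to take \emph{all} $2n+1$ blocks of the second orbit, which saturates differences $1$, $2$, $3$; the equations for the remaining differences then have at most two or three nonzero entries per row and can be arranged into a triangular system, whose unique solution puts every $f_i$ equal to $0$ or $v$. That would make the embedded STS$(v)$ cyclic, which is impossible because a cyclic STS$(v)$ with $v\equiv 3\pmod 6$ must contain a short orbit while Theorems~\ref{th1} and~\ref{th2} show the construction has none.

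Your proposal contains the raw ingredients for both steps (the local pair-conflict rule, the linear system over the differences, the no-short-orbit fact), but it never assembles them into a contradiction, and the direction you indicate for the finish is not the one that works. Note also that your stated obstacle --- that the higher orbits $\mathcal{O}_3,\mathcal{O}_4,\ldots$ depend on the particular Skolem sequence and would force a four-way case analysis over Tables~\ref{hL4n-3}--\ref{L3n-2germa} --- dissolves in the paper's argument: only the first two base blocks are examined explicitly, and the higher orbits enter only through the generic triangular shape of the difference equations, so no table-by-table analysis (and no computer check) is needed for indecomposability.
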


\begin{proof}
Suppose that $v\equiv~3~(\textup{mod}~6)$ and write $v=2n+1$, $n\equiv 0 ~ \textup{or} ~1\,(\textup{mod}\,4)$, $n\geq 8$. 

Now, for an TS$_3(2n+1)$ to be decomposable, there must be a Steiner triple system STS($2n+1$) inside the TS$_3(2n+1)$. 

If $2n+1\equiv ~3~(\textup{mod}\,6)$, let $\{x_i,x_j,x_k\}$ be a triple using symbols from N$_{2n+1}=\{0,1,\ldots,2n\}$. Let
 $d_{ij}=\textup{min}~\{|x_i-x_j|,2n+1-|x_i-x_j|\}$ be the difference between $x_i$ and $x_j$. An STS($2n+1$) on N$_{2n+1}$ must
 have a set of triples with the property that each difference $d$, $1\leq d\leq n$, occurs exactly $2n+1$ times. Assume there is an
 STS($2n+1$) inside our TS$_3(2n+1)$ and let $f_{\alpha}$ be the number of triples inside the STS($2n+1$) which are a cyclic shift of $\{0,\alpha,b_{\alpha}\}$. 

It is enough to look at the first two base blocks of our TS$_3(2n+1)$. These are $\{0,1,2\}$ $(\textup{mod} ~ 2n+1)$ and $\{0,2,2n\}~(\textup{mod} ~ 2n+1)$.
Then the existence of an STS($2n+1$) inside our TS$_3(2n+1)$ requires that the equation $2f_1+f_2=2n+1$ must have a solution in nonnegative integers (we need
 the difference $1$ to occur exactly $2n+1$ times). 

{\bf Case \mbox {\boldmath $1$}: \mbox {\boldmath $f_1=1$}}

Suppose we choose one block from the orbit $\{0,1,2\}(\textup{mod}\;2n+1)$. Since this orbit uses the difference $1$ twice and the difference $2$, and the
 orbit $\{0,2,2n\}(\textup{mod} ~ 2n+1)$ uses the differences $1,\,2$ and $3$, whenever we pick one block from the first orbit we cannot choose three blocks from
 the second orbit (i.e., those blocks where the pairs $(0,1)$, $(0,2)$ and $(1,2)$ are included). So, we just have $2n-2$ blocks in the second orbit to choose from. 
But we need $2n-1$ blocks from the second orbit in order to cover difference $1$ exactly $2n+1$ times.

Therefore, we have no solution in this case.

{\bf Case \mbox {\boldmath$2$}: \mbox {\boldmath $f_1=2$}}

Since $f_2=\frac{2n-3}{2}$ is not an integer, we have no solution in this case.

{\bf Case \mbox {\boldmath$3$}: \mbox {\boldmath $f_1=3,5,\ldots, n\,(\textup{or}\,n-1)$}}

Similar to Case $1$. So, there is no solution in this case.

{\bf Case \mbox {\boldmath$4$}: \mbox {\boldmath $f_1=4,6,\ldots, n\,(\textup{or}\,n-1)$}}

Similar to Case $2$. So, there is no solution in this case.

{\bf Case \mbox {\boldmath$5$}: \mbox {\boldmath $f_1=0$}}

Note that our cyclic TS$_3(v)$ has no short orbits (Theorem \ref{th1}), while a cyclic STS$(v)$ will have a short orbit. Therefore, if a design 
exists inside our TS$_3(v)$, that design is not cyclic.

Now, we choose no block from the first orbit and all the blocks in the second orbit (i.e., $f_1=0,\;f_2=2n+1$). Therefore differences $1$, $2$ and $3$ are all
 covered each exactly $2n+1$ in the STS$(v)$. From the remaining $n-2$ orbits $\{0,i,b_i\},\,i\geq 3$ there will be two or three orbits which will use differences $2$ and $3$. 
Since differences $1$, $2$ and $3$ are already covered, we cannot choose any block from those orbits that uses these three differences. So, we are left with $n-4$
 or $n-5$ orbits to choose from.
We need to cover differences $4,5,\ldots,n$ ($n-3$ differences) each exactly $v=2n+1$ times.

We form a system of $n-3$ equations with $n-4$ or $n-5$ unknowns in the following way: when a difference appears in different orbits,
 the sum of the blocks that we choose from each orbit has to equal $v$, i.e., if difference $4$ appears in $\{0,5,b_5\}$, $\{0,7,b_7\}$ and $\{0,10,b_{10}\}$ we have
 $f_5+f_7+f_{10}=v$ or
 if difference $4$ appears in $\{0,7,b_7\}$ twice and in $\{0,9,b_9\}$ once, we have $2f_7+f_9=v$. The system that we form has two or three entries in each row non-zero
 while all the others entries will equal zero. The rows in the system can be rearranged so that we get an upper triangular matrix. Therefore, the system of equations is non-singular and it has the unique solution 
$f_{i_1}=f_{i_2}=\ldots =f_{i_k}=v$ for some $4\leq i_1,i_2,\ldots,i_k\leq n$ and $f_{j_1}=f_{j_2}=\ldots =f_{j_k}=0$ for some $4\leq j_1,j_2,\ldots,j_k\leq n$. But this
 implies that the STS$(v)$ inside our TS$_3(v)$ is cyclic, which is impossible.

Therefore, we have no solution in this case. It follows that our TS$_3(2n+1)$ is indecomposable.

Now, suppose that $v=2n+1$, $n\equiv 2 ~ \textup{or} ~3 ~(\textup{mod} ~4)$, $n\geq 7$. Let $f_{\alpha}$ be the number of triples inside the STS($2n+1$) which are a cyclic shift of $\{0,\alpha,b_{\alpha}+1\}$.
 Using the same argument as before, it is easy to show that the equation $2f_2+f_1=2n+1$ has no solution. Therefore our TS$_3(2n+1)$ is indecomposable.
\end{proof}

\section{Cyclic, Simple, and Indecomposable Three-fold Triple Systems}

\begin{theorem}
There exists cyclic, simple, and indecomposable three-fold triple systems, TS$_3(v)$, for every $v\equiv 1~(\textup{mod}~2)$, $v\geq 5, v\neq 9$ and $v\neq 24c+57$, $c\geq 2$.
\end{theorem}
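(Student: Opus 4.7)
The plan is to treat the final theorem as a synthesis of the constructions already established in the paper, via a case split on $v \pmod{6}$. Since $v$ is odd with $v \geq 5$, it must be congruent to $1$, $3$, or $5 \pmod{6}$. These three residue classes will be handled by invoking either the classical Kramer starter or one of the two new Skolem-based constructions of Section~3.

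For $v \equiv 1 \pmod{6}$ or $v \equiv 5 \pmod{6}$, I would invoke Theorem~\ref{kramer} directly: the base blocks $\{0,\alpha,-\alpha\} \pmod{v}$ for $\alpha = 1,\ldots,(v-1)/2$ produce a cyclic, simple, and indecomposable TS$_3(v)$. For $v \equiv 3 \pmod{6}$, the hypotheses $v \geq 5$ and $v \neq 9$ leave $v \geq 15$. Writing $v = 2n+1$, I would split on $n \pmod{4}$: when $n \equiv 0$ or $1 \pmod{4}$ (so $n \geq 8$) apply Construction~\ref{c7} to the Skolem sequence furnished by Lemma~\ref{lem1}, and when $n \equiv 2$ or $3 \pmod{4}$ (so $n \geq 7$) apply Construction~\ref{c8} to the hooked Skolem sequence from Lemma~\ref{lem2}. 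Simplicity in the first subcase follows from Theorem~\ref{th1} (which is where the exception $v \neq 24c+57$, $c \geq 2$ enters), and in the second subcase from Theorem~\ref{th2}. Indecomposability in both subcases follows from Theorem~\ref{ind1}, while the cyclic property is built into both constructions.

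Because the substantive work has already been done in the earlier theorems, the proof here is essentially bookkeeping and there is no serious new obstacle. The one routine check is that, as $n$ ranges over all residue classes mod $4$ with $n \geq 7$, the values $v = 2n+1$ cover every odd integer $\geq 15$; in particular, every $v \equiv 3 \pmod{6}$ with $v \geq 15$ falls under the scope of exactly one of Constructions~\ref{c7} and~\ref{c8}, so the case analysis is exhaustive. The excluded values $v = 9$ and $v = 24c+57$ with $c \geq 2$ are inherited from the limitations of Lemmas~\ref{lem1},~\ref{lem2} and Theorem~\ref{th1} respectively, and remain genuine open cases of the method.
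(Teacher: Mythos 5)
Your proposal matches the paper's own proof: both argue by cases on $v \bmod 6$, invoking Theorem~\ref{kramer} for $v\equiv 1,5\pmod 6$ and, for $v\equiv 3\pmod 6$, splitting on $n\bmod 4$ to apply Construction~\ref{c7} with Lemma~\ref{lem1} or Construction~\ref{c8} with Lemma~\ref{lem2}, citing Theorems~\ref{th1},~\ref{th2}, and~\ref{ind1} for simplicity and indecomposability. The reasoning and the handling of the exceptional values $v=9$ and $v=24c+57$ are the same as in the paper.
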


\begin{proof}

Let $v\equiv 1~\textup{or}~5~(\textup{mod}~6)$ and take the base blocks $\{0,\alpha,-\alpha\}(\textup{mod}\;v)|\alpha=0,1,\ldots,\frac{1}{2}(v-1)$. By Theorem 
\ref{kramer}, these will be the base blocks of a cyclic, simple, and indecomposable three-fold triple system.

Let $v\equiv 3~(\textup{mod}~6)$, and write $v=2n+1$, $n\equiv 0 ~ \textup{or} ~1\,(\textup{mod}\,4)$, $n\geq 8$. Apply Construction \ref{c7} to the Skolem sequence 
of order $n$ given by Lemma \ref{lem1}. These designs are cyclic by Construction \ref{c5}, simple for all $v$ except $v=24c+57$, $c\geq2$ by Theorem \ref{th1} and indecomposable by Theorem \ref{ind1}.

Let $v\equiv 3~(\textup{mod}~6)$, and write $v=2n+1$, $n\equiv 2 ~ \textup{or} ~3\,(\textup{mod}\,4)$, $n\geq 7$. Apply Construction \ref{c8} to the hooked Skolem
 sequence of order $n$ given by Lemma \ref{lem2}. These designs are cyclic by Construction \ref{c6}, simple by Theorem \ref{th2} and indecomposable by
 Theorem \ref{ind1}.
\end{proof}

\section{Conclusion and Open Problems}

We constructed, using Skolem-type sequences, three-fold triple systems having all the properties of being cyclic, simple and indecomposable,
 for $v\equiv~3~(\textup{mod}~6)$ except for $v=9$ and $v=24c+57$, $c\geq2$. Our results, together with Kramer's results \cite{kramer}, completely solve the problem of finding 
three-fold triple systems having three properties: cyclic, simple, and indecomposable with some possible exceptions for $v=9$ and $v=24c+7$, $c\geq2$.

In our approach of finding cyclic, simple, and indecomposable three-fold triple systems, proving the simplicity of the designs was a 
tedious and long task. Another approach that we tried was in constructing three disjoint (i.e. no two pairs in the same 
positions) sequences of order $n$ and taking the
 base blocks $\{0,i,b_i+n\}, i=1,2,\ldots,n$. These base blocks form a cyclic TS$_3(6n+1)$. Also, someone can take three disjoint 
hooked sequences of order $n$ and taking the base blocks
 $\{0,i,b_i+n\}, i=1,2,\ldots,n$ together with the short orbit $\{0,\frac{v}{3},\frac{2v}{3}\}$. These base blocks form a cyclic 
TS$_3(6n+3)$.

\begin{ex}
For $n=5$, take the three disjoint hooked sequences of order $n$:
\begin{center}
$(1,1,4,1,1,0,4,2,3,2,0,3)$

$(2,3,2,3,3,0,3,4,1,1,0,4)$

$(4,5,5,5,4,0,5,5,5,2,0,2)$
\end{center}

Then the base blocks $\{0,1,7\},\{0,1,10\},\{0,1,15\},\{0,2,8\},\{0,2,15\},\linebreak \{0,2,17\},\{0,3,10\},
\{0,3,12\},\{0,3,17\},\{0,4,10\},\{0,4,17\},\{0,4,12\},\linebreak \{0,5,12\},\{0,5,13\},\{0,5,14\},\{0,11,22\}$ are the base blocks of a cyclic, simple, and indecomposable TS$_3(33)$.
\end{ex}

The simplicity is easy to prove here since the three hooked sequences that we used share no pairs in the same positions. On the other 
hand, on first inspection, to prove the indecomposability of such designs appears to be more difficult. Also, someone needs to find 
three disjoint such sequences for all admissible orders $n$.

\begin{problem}
Can the above approach of finding cyclic, simple, and indecomposable three-fold triple systems of order $v$ be generalized for all admissible orders?
\end{problem}

\begin{problem}
Are there cyclic, simple, and indecomposable TS$_3(24c+57)$, $c\geq 2$?
\end{problem}

\begin{problem}
Are there cyclic, simple, and indecomposable designs for $\lambda\geq 4$ and all admissible orders?
\end{problem}

\begin{problem}
For $\lambda\geq 3$ what is the spectrum of those $v$ for which there exists a cyclically indecomposable but decomposable cyclic TS$_\lambda(v)$?
\end{problem}


\begin{ex}

Let $V=\{0,1,2,3,4,5,6,7,8\}$. Then the blocks of a cyclic TS$_3(9)$ are:

$\{0,1,2\}, \{0,2,7\}, \{0,3,6\}, \{0,4,8\}$

$\{1,2,3\}, \{1,3,8\}, \{1,4,7\}, \{1,5,0\}$

$\{2,3,4\}, \{2,4,0\}, \{2,5,8\}, \{2,6,1\}$

$\{3,4,5\}, \{3,5,1\}, \{3,6,0\}, \{3,7,2\}$

$\{4,5,6\}, \{4,6,2\}, \{4,7,1\}, \{4,8,3\}$

$\{5,6,7\}, \{5,7,3\}, \{5,8,2\}, \{5,0,4\}$

$\{6,7,8\}, \{6,8,4\}, \{6,0,3\}, \{6,1,5\}$

$\{7,8,0\}, \{7,0,5\}, \{7,1,4\}, \{7,2,6\}$

$\{8,0,1\}, \{8,1,6\}, \{8,2,5\}, \{8,3,7\}$

This design is cyclic, simple, and is decomposable. The blocks $\{0,1,2\},\linebreak \{3,4,5\},\{6,7,8\}, \{1,3,8\},\{4,6,2\},\{7,0,5\},\{0,3,6\},\{1,4,7\},\{2,5,8\},\linebreak \{0,4,8\},
 \{3,7,2\},\{6,1,5\}$ form an STS$(9)$. The designs is cyclically indecomposable since no CSTS$(9)$ exists.
\end{ex}

\section{Appendix A: Mathematica Program}
\label{program}
\scriptsize
\begin{verbatim}
Ltable = {1, 2, 3, 4};
Ltable[[1]] = {{2*r + 3 - j, 2*r + 7 + j, 4 + 2*j, r},
   	{r + 2 - j, 3*r + 9 + j, 2*r + 7 + 2*j, r - 1},
   	{6*r + 12 - j, 6*r + 17 + j, 5 + 2*j, r - 1},
   	{5*r + 12 - j, 7*r + 18 + j, 2*r + 6 + 2*j, r},
   	{3*r + 8, 7*r + 17, 4*r + 9, 0},
   	{4*r + 9, 8*r + 21, 4*r + 12, 0},
   	{2*r + 6, 6*r + 13, 4*r + 7, 0},
   	{2*r + 5, 6*r + 16, 4*r + 11, 0},
   	{4*r + 11, 8*r + 19, 4*r + 8, 0},
   	{4*r + 10, 6*r + 15, 2*r + 5, 0},
   	{2*r + 4, 6*r + 14, 4*r + 10, 0}
   };
Ltable[[2]] = {{2*r + 3 - j, 2*r + 6 + j, 3 + 2*j, r},
   	{r + 2 - j, 3*r + 8 + j, 2*r + 6 + 2*j, r - 1},
   	{6*r + 10 - j, 6*r + 14 + j, 4 + 2*j, r - 1},
   	{5*r + 10 - j, 7*r + 15 + j, 2*r + 5 + 2*j, r},
   	{3*r + 7, 7*r + 14, 4*r + 7, 0},
   	{4*r + 8, 8*r + 17, 4*r + 9, 0},
   	{2*r + 4, 6*r + 12, 4*r + 8, 0},
   	{2*r + 5, 6*r + 11, 4*r + 6, 0},
   	{4*r + 9, 6*r + 13, 2*r + 4, 0}
   };
Ltable[[3]] = {{2*r - j, 2*r + 4 + j, 4 + 2*j, r - 3},
   	{r + 2 - j, 3*r + 3 + j, 2*r + 1 + 2*j, r - 1},
   	{6*r + 1 - j, 6*r + 4 + j, 3 + 2*j, r - 2},
   	{5*r + 2 - j, 7*r + 4 + j, 2*r + 2 + 2*j, r - 2},
   	{2*r + 3, 4*r + 3, 2*r, 0},
   	{3*r + 2, 7*r + 3, 4*r + 1, 0},
   	{2*r + 1, 6*r + 3, 4*r + 2, 0},
   	{2*r + 2, 6*r + 2, 4*r, 0}
   };
Ltable[[4]] = {{2*r + 2 - j, 2*r + 6 + j, 4 + 2*j, r - 2},
   	{r + 2 - j, 3*r + 5 + j, 2*r + 3 + 2*j, r - 2},
   	{3, 4*r + 4, 4*r + 1, 0},
   	{2*r + 4, 4*r + 5, 2*r + 1, 0},
   	{r + 3, 5*r + 5, 4*r + 2, 0},
   	{2*r + 5, 6*r + 5, 4*r, 0},
   	{2*r + 3, 6*r + 6, 4*r + 3, 0},
   	{6*r + 4 - j, 6*r + 7 + j, 3 + 2*j, r - 2},
   	{5*r + 4 - j, 7*r + 6 + j, 2*r + 2 + 2*j, r - 2}
   };

eqns = {i1 + i2 == bi2, bi1 + i2 == 2*n + 1};
thmMapping = {"Theorem 4.2 Case n = 4k", "Theorem 4.2 Case n = 4k+1", 
   "Theorem 4.4 Case n = 4k+2", "Theorem 4.4 Case n = 4k+3"};
nMapping = {12 + 4*r, 9 + 4*r, 4*r + 2, 4*r + 3};
rbound = {1, 0, 2, 2};
summary = {};

Off[Solve::svars];

Print["System of Equations:"];
Print[eqns[[1]]];
Print[eqns[[2]]];

For[thm = 1, thm <= 4, thm++,
  Print["**********************************"];
  Print[thmMapping[[thm]]];
  For[i = 1, i <= Length[Ltable[[thm]]], i++,
   For[k = 1, k <= Length[Ltable[[thm]]], k++,
     teq1 = 
      eqns[[1]] /. {i1 -> Ltable[[thm, i, 3]] /. j -> j1, 
        i2 -> Ltable[[thm, k, 3]] /. j -> j2};
     teq2 = 
      eqns[[2]] /. {i2 -> Ltable[[thm, k, 3]] /. j -> j2, 
        n -> nMapping[[thm]]};
     If[thm == 1 || thm == 2,
      teq1 = teq1 /. bi2 -> Ltable[[thm, k, 2]] /. j -> j2;
      teq2 = teq2 /. bi1 -> Ltable[[thm, i, 2]] /. j -> j1;,
      teq1 = teq1 /. bi2 -> Ltable[[thm, k, 2]] + 1 /. j -> j2;
      teq2 = teq2 /. bi1 -> Ltable[[thm, i, 2]] + 1 /. j -> j1;
      ];
     Print["Lines " <> ToString[i] <> " and " <> ToString[k]];
     Print[teq1];
     Print[teq2];
     sol = 
      Solve[teq1 && teq2 && j1 >= 0 && j2 >= 0 && r >= rbound[[thm]] &&
         j1 <= Ltable[[thm, i, 4]] && j2 <= Ltable[[thm, k, 4]], {j1, 
        j2, r}, Integers];
     If[Length[sol] > 0,
      summary = Append[summary, {thm, i, k, sol}];
      Print[sol];,
      Print["No solutions"];
      ];
     Print["----------------------------------"];
     ];
   ];
  ];
Print["**********************************"];
Print["Exception Cases:"];
Print["Theorem, First Line, Second Line, Solution"];
For[i = 1, i <= Length[summary], i++,
  Print[thmMapping[[summary[[i, 1]]]] <> ", " <> 
     ToString[summary[[i, 2]]] <> ", " <> ToString[summary[[i, 3]]] <>
      ", " <> ToString[summary[[i, 4]]]];
  ];
\end{verbatim}


\section{Appendix B: Program Results}
\label{results}
\begin{multicols}{2}
\begin{verbatim}
System of Equations:

i1+i2==bi2

bi1+i2==1+2 n

**********************************

Theorem 4.2 Case n = 4k

Lines 1 and 1

8+2 j1+2 j2==7+j2+2 r

11+j1+2 j2+2 r==1+2 (12+4 r)

No solutions

----------------------------------

Lines 1 and 2

11+2 j1+2 j2+2 r==9+j2+3 r

14+j1+2 j2+4 r==1+2 (12+4 r)

No solutions

----------------------------------

Lines 1 and 3

9+2 j1+2 j2==17+j2+6 r

12+j1+2 j2+2 r==1+2 (12+4 r)

No solutions

----------------------------------

Lines 1 and 4

10+2 j1+2 j2+2 r==18+j2+7 r

13+j1+2 j2+4 r==1+2 (12+4 r)

No solutions

----------------------------------

Lines 1 and 5

13+2 j1+4 r==17+7 r

16+j1+6 r==1+2 (12+4 r)

No solutions

----------------------------------

Lines 1 and 6

16+2 j1+4 r==21+8 r

19+j1+6 r==1+2 (12+4 r)

No solutions

----------------------------------

Lines 1 and 7

11+2 j1+4 r==13+6 r

14+j1+6 r==1+2 (12+4 r)

No solutions

----------------------------------

Lines 1 and 8

15+2 j1+4 r==16+6 r

18+j1+6 r==1+2 (12+4 r)

No solutions

----------------------------------

Lines 1 and 9

12+2 j1+4 r==19+8 r

15+j1+6 r==1+2 (12+4 r)

No solutions

----------------------------------

Lines 1 and 10

9+2 j1+2 r==15+6 r

12+j1+4 r==1+2 (12+4 r)

No solutions

----------------------------------

Lines 1 and 11

14+2 j1+4 r==14+6 r

17+j1+6 r==1+2 (12+4 r)

No solutions

----------------------------------

Lines 2 and 1

11+2 j1+2 j2+2 r==7+j2+2 r

13+j1+2 j2+3 r==1+2 (12+4 r)

No solutions

----------------------------------

Lines 2 and 2

14+2 j1+2 j2+4 r==9+j2+3 r

16+j1+2 j2+5 r==1+2 (12+4 r)

No solutions

----------------------------------

Lines 2 and 3

12+2 j1+2 j2+2 r==17+j2+6 r

14+j1+2 j2+3 r==1+2 (12+4 r)

No solutions

----------------------------------

Lines 2 and 4

13+2 j1+2 j2+4 r==18+j2+7 r

15+j1+2 j2+5 r==1+2 (12+4 r)

No solutions

----------------------------------

Lines 2 and 5

16+2 j1+6 r==17+7 r

18+j1+7 r==1+2 (12+4 r)

No solutions

----------------------------------

Lines 2 and 6

19+2 j1+6 r==21+8 r

21+j1+7 r==1+2 (12+4 r)

No solutions

----------------------------------

Lines 2 and 7

14+2 j1+6 r==13+6 r

16+j1+7 r==1+2 (12+4 r)

No solutions

----------------------------------

Lines 2 and 8

18+2 j1+6 r==16+6 r

20+j1+7 r==1+2 (12+4 r)

No solutions

----------------------------------

Lines 2 and 9

15+2 j1+6 r==19+8 r

17+j1+7 r==1+2 (12+4 r)

No solutions

----------------------------------

Lines 2 and 10

12+2 j1+4 r==15+6 r

14+j1+5 r==1+2 (12+4 r)

No solutions

----------------------------------

Lines 2 and 11

17+2 j1+6 r==14+6 r

19+j1+7 r==1+2 (12+4 r)

No solutions

----------------------------------

Lines 3 and 1

9+2 j1+2 j2==7+j2+2 r

21+j1+2 j2+6 r==1+2 (12+4 r)

{{j1->ConditionalExpression[2 C[1],
    C[1] \[Element] Integers && C[1] >= 2],
  j2->ConditionalExpression[6+2 C[1],
    C[1] \[Element] Integers && C[1] >= 2],
  r->ConditionalExpression[4+3 C[1],
    C[1] \[Element] Integers && C[1] >= 2]}}

----------------------------------

Lines 3 and 2

12+2 j1+2 j2+2 r==9+j2+3 r

24+j1+2 j2+8 r==1+2 (12+4 r)

{{j1->1,j2->0,r->5}}

----------------------------------

Lines 3 and 3

10+2 j1+2 j2==17+j2+6 r

22+j1+2 j2+6 r==1+2 (12+4 r)

No solutions

----------------------------------

Lines 3 and 4

11+2 j1+2 j2+2 r==18+j2+7 r

23+j1+2 j2+8 r==1+2 (12+4 r)

No solutions

----------------------------------

Lines 3 and 5

14+2 j1+4 r==17+7 r

26+j1+10 r==1+2 (12+4 r)

No solutions

----------------------------------

Lines 3 and 6

17+2 j1+4 r==21+8 r

29+j1+10 r==1+2 (12+4 r)

No solutions

----------------------------------

Lines 3 and 7

12+2 j1+4 r==13+6 r

24+j1+10 r==1+2 (12+4 r)

No solutions

----------------------------------

Lines 3 and 8

16+2 j1+4 r==16+6 r

28+j1+10 r==1+2 (12+4 r)

No solutions

----------------------------------

Lines 3 and 9

13+2 j1+4 r==19+8 r

25+j1+10 r==1+2 (12+4 r)

No solutions

----------------------------------

Lines 3 and 10

10+2 j1+2 r==15+6 r

22+j1+8 r==1+2 (12+4 r)

No solutions

----------------------------------

Lines 3 and 11

15+2 j1+4 r==14+6 r

27+j1+10 r==1+2 (12+4 r)

No solutions

----------------------------------

Lines 4 and 1

10+2 j1+2 j2+2 r==7+j2+2 r

22+j1+2 j2+7 r==1+2 (12+4 r)

No solutions

----------------------------------

Lines 4 and 2

13+2 j1+2 j2+4 r==9+j2+3 r

25+j1+2 j2+9 r==1+2 (12+4 r)

No solutions

----------------------------------

Lines 4 and 3

11+2 j1+2 j2+2 r==17+j2+6 r

23+j1+2 j2+7 r==1+2 (12+4 r)

No solutions

----------------------------------

Lines 4 and 4

12+2 j1+2 j2+4 r==18+j2+7 r

24+j1+2 j2+9 r==1+2 (12+4 r)

No solutions

----------------------------------

Lines 4 and 5

15+2 j1+6 r==17+7 r

27+j1+11 r==1+2 (12+4 r)

No solutions

----------------------------------

Lines 4 and 6

18+2 j1+6 r==21+8 r

30+j1+11 r==1+2 (12+4 r)

No solutions

----------------------------------

Lines 4 and 7

13+2 j1+6 r==13+6 r

25+j1+11 r==1+2 (12+4 r)

No solutions

----------------------------------

Lines 4 and 8

17+2 j1+6 r==16+6 r

29+j1+11 r==1+2 (12+4 r)

No solutions

----------------------------------

Lines 4 and 9

14+2 j1+6 r==19+8 r

26+j1+11 r==1+2 (12+4 r)

No solutions

----------------------------------

Lines 4 and 10

11+2 j1+4 r==15+6 r

23+j1+9 r==1+2 (12+4 r)

No solutions

----------------------------------

Lines 4 and 11

16+2 j1+6 r==14+6 r

28+j1+11 r==1+2 (12+4 r)

No solutions

----------------------------------

Lines 5 and 1

13+2 j2+4 r==7+j2+2 r

21+2 j2+7 r==1+2 (12+4 r)

No solutions

----------------------------------

Lines 5 and 2

16+2 j2+6 r==9+j2+3 r

24+2 j2+9 r==1+2 (12+4 r)

No solutions

----------------------------------

Lines 5 and 3

14+2 j2+4 r==17+j2+6 r

22+2 j2+7 r==1+2 (12+4 r)

No solutions

----------------------------------

Lines 5 and 4

15+2 j2+6 r==18+j2+7 r

23+2 j2+9 r==1+2 (12+4 r)

No solutions

----------------------------------

Lines 5 and 5

18+8 r==17+7 r

26+11 r==1+2 (12+4 r)

No solutions

----------------------------------

Lines 5 and 6

True

29+11 r==1+2 (12+4 r)

No solutions

----------------------------------

Lines 5 and 7

16+8 r==13+6 r

24+11 r==1+2 (12+4 r)

No solutions

----------------------------------

Lines 5 and 8

20+8 r==16+6 r

28+11 r==1+2 (12+4 r)

No solutions

----------------------------------

Lines 5 and 9

17+8 r==19+8 r

25+11 r==1+2 (12+4 r)

No solutions

----------------------------------

Lines 5 and 10

14+6 r==15+6 r

22+9 r==1+2 (12+4 r)

No solutions

----------------------------------

Lines 5 and 11

19+8 r==14+6 r

27+11 r==1+2 (12+4 r)

No solutions

----------------------------------

Lines 6 and 1

16+2 j2+4 r==7+j2+2 r

25+2 j2+8 r==1+2 (12+4 r)

No solutions

----------------------------------

Lines 6 and 2

19+2 j2+6 r==9+j2+3 r

28+2 j2+10 r==1+2 (12+4 r)

No solutions

----------------------------------

Lines 6 and 3

17+2 j2+4 r==17+j2+6 r

26+2 j2+8 r==1+2 (12+4 r)

No solutions

----------------------------------

Lines 6 and 4

18+2 j2+6 r==18+j2+7 r

27+2 j2+10 r==1+2 (12+4 r)

No solutions

----------------------------------

Lines 6 and 5

21+8 r==17+7 r

30+12 r==1+2 (12+4 r)

No solutions

----------------------------------

Lines 6 and 6

24+8 r==21+8 r

33+12 r==1+2 (12+4 r)

No solutions

----------------------------------

Lines 6 and 7

19+8 r==13+6 r

28+12 r==1+2 (12+4 r)

No solutions

----------------------------------

Lines 6 and 8

23+8 r==16+6 r

32+12 r==1+2 (12+4 r)

No solutions

----------------------------------

Lines 6 and 9

20+8 r==19+8 r

29+12 r==1+2 (12+4 r)

No solutions

----------------------------------

Lines 6 and 10

17+6 r==15+6 r

26+10 r==1+2 (12+4 r)

No solutions

----------------------------------

Lines 6 and 11

22+8 r==14+6 r

31+12 r==1+2 (12+4 r)

No solutions

----------------------------------

Lines 7 and 1

11+2 j2+4 r==7+j2+2 r

17+2 j2+6 r==1+2 (12+4 r)

No solutions

----------------------------------

Lines 7 and 2

14+2 j2+6 r==9+j2+3 r

20+2 j2+8 r==1+2 (12+4 r)

No solutions

----------------------------------

Lines 7 and 3

12+2 j2+4 r==17+j2+6 r

18+2 j2+6 r==1+2 (12+4 r)

No solutions

----------------------------------

Lines 7 and 4

13+2 j2+6 r==18+j2+7 r

19+2 j2+8 r==1+2 (12+4 r)

No solutions

----------------------------------

Lines 7 and 5

16+8 r==17+7 r

22+10 r==1+2 (12+4 r)

No solutions

----------------------------------

Lines 7 and 6

19+8 r==21+8 r

25+10 r==1+2 (12+4 r)

No solutions

----------------------------------

Lines 7 and 7

14+8 r==13+6 r

20+10 r==1+2 (12+4 r)

No solutions

----------------------------------

Lines 7 and 8

18+8 r==16+6 r

24+10 r==1+2 (12+4 r)

No solutions

----------------------------------

Lines 7 and 9

15+8 r==19+8 r

21+10 r==1+2 (12+4 r)

No solutions

----------------------------------

Lines 7 and 10

12+6 r==15+6 r

18+8 r==1+2 (12+4 r)

No solutions

----------------------------------

Lines 7 and 11

17+8 r==14+6 r

23+10 r==1+2 (12+4 r)

No solutions

----------------------------------

Lines 8 and 1

15+2 j2+4 r==7+j2+2 r

20+2 j2+6 r==1+2 (12+4 r)

No solutions

----------------------------------

Lines 8 and 2

18+2 j2+6 r==9+j2+3 r

23+2 j2+8 r==1+2 (12+4 r)

No solutions

----------------------------------

Lines 8 and 3

16+2 j2+4 r==17+j2+6 r

21+2 j2+6 r==1+2 (12+4 r)

No solutions

----------------------------------

Lines 8 and 4

17+2 j2+6 r==18+j2+7 r

22+2 j2+8 r==1+2 (12+4 r)

No solutions

----------------------------------

Lines 8 and 5

20+8 r==17+7 r

25+10 r==1+2 (12+4 r)

No solutions

----------------------------------

Lines 8 and 6

23+8 r==21+8 r

28+10 r==1+2 (12+4 r)

No solutions

----------------------------------

Lines 8 and 7

18+8 r==13+6 r

23+10 r==1+2 (12+4 r)

No solutions

----------------------------------

Lines 8 and 8

22+8 r==16+6 r

27+10 r==1+2 (12+4 r)

No solutions

----------------------------------

Lines 8 and 9

True

24+10 r==1+2 (12+4 r)

No solutions

----------------------------------

Lines 8 and 10

16+6 r==15+6 r

21+8 r==1+2 (12+4 r)

No solutions

----------------------------------

Lines 8 and 11

21+8 r==14+6 r

26+10 r==1+2 (12+4 r)

No solutions

----------------------------------

Lines 9 and 1

12+2 j2+4 r==7+j2+2 r

23+2 j2+8 r==1+2 (12+4 r)

No solutions

----------------------------------

Lines 9 and 2

15+2 j2+6 r==9+j2+3 r

26+2 j2+10 r==1+2 (12+4 r)

No solutions

----------------------------------

Lines 9 and 3

13+2 j2+4 r==17+j2+6 r

24+2 j2+8 r==1+2 (12+4 r)

No solutions

----------------------------------

Lines 9 and 4

14+2 j2+6 r==18+j2+7 r

25+2 j2+10 r==1+2 (12+4 r)

No solutions

----------------------------------

Lines 9 and 5

17+8 r==17+7 r

28+12 r==1+2 (12+4 r)

No solutions

----------------------------------

Lines 9 and 6

20+8 r==21+8 r

31+12 r==1+2 (12+4 r)

No solutions

----------------------------------

Lines 9 and 7

15+8 r==13+6 r

26+12 r==1+2 (12+4 r)

No solutions

----------------------------------

Lines 9 and 8

19+8 r==16+6 r

30+12 r==1+2 (12+4 r)

No solutions

----------------------------------

Lines 9 and 9

16+8 r==19+8 r

27+12 r==1+2 (12+4 r)

No solutions

----------------------------------

Lines 9 and 10

13+6 r==15+6 r

24+10 r==1+2 (12+4 r)

No solutions

----------------------------------

Lines 9 and 11

18+8 r==14+6 r

29+12 r==1+2 (12+4 r)

No solutions

----------------------------------

Lines 10 and 1

9+2 j2+2 r==7+j2+2 r

19+2 j2+6 r==1+2 (12+4 r)

No solutions

----------------------------------

Lines 10 and 2

12+2 j2+4 r==9+j2+3 r

22+2 j2+8 r==1+2 (12+4 r)

No solutions

----------------------------------

Lines 10 and 3

10+2 j2+2 r==17+j2+6 r

20+2 j2+6 r==1+2 (12+4 r)

No solutions

----------------------------------

Lines 10 and 4

11+2 j2+4 r==18+j2+7 r

21+2 j2+8 r==1+2 (12+4 r)

No solutions

----------------------------------

Lines 10 and 5

14+6 r==17+7 r

24+10 r==1+2 (12+4 r)

No solutions

----------------------------------

Lines 10 and 6

17+6 r==21+8 r

27+10 r==1+2 (12+4 r)

No solutions

----------------------------------

Lines 10 and 7

12+6 r==13+6 r

22+10 r==1+2 (12+4 r)

No solutions

----------------------------------

Lines 10 and 8

True

26+10 r==1+2 (12+4 r)

No solutions

----------------------------------

Lines 10 and 9

13+6 r==19+8 r

23+10 r==1+2 (12+4 r)

No solutions

----------------------------------

Lines 10 and 10

10+4 r==15+6 r

20+8 r==1+2 (12+4 r)

No solutions

----------------------------------

Lines 10 and 11

15+6 r==14+6 r

25+10 r==1+2 (12+4 r)

No solutions

----------------------------------

Lines 11 and 1

14+2 j2+4 r==7+j2+2 r

18+2 j2+6 r==1+2 (12+4 r)

No solutions

----------------------------------

Lines 11 and 2

17+2 j2+6 r==9+j2+3 r

21+2 j2+8 r==1+2 (12+4 r)

No solutions

----------------------------------

Lines 11 and 3

15+2 j2+4 r==17+j2+6 r

19+2 j2+6 r==1+2 (12+4 r)

No solutions

----------------------------------

Lines 11 and 4

16+2 j2+6 r==18+j2+7 r

20+2 j2+8 r==1+2 (12+4 r)

No solutions

----------------------------------

Lines 11 and 5

19+8 r==17+7 r

23+10 r==1+2 (12+4 r)

No solutions

----------------------------------

Lines 11 and 6

22+8 r==21+8 r

26+10 r==1+2 (12+4 r)

No solutions

----------------------------------

Lines 11 and 7

17+8 r==13+6 r

21+10 r==1+2 (12+4 r)

No solutions

----------------------------------

Lines 11 and 8

21+8 r==16+6 r

25+10 r==1+2 (12+4 r)

No solutions

----------------------------------

Lines 11 and 9

18+8 r==19+8 r

22+10 r==1+2 (12+4 r)

No solutions

----------------------------------

Lines 11 and 10

True

19+8 r==1+2 (12+4 r)

No solutions

----------------------------------

Lines 11 and 11

20+8 r==14+6 r

24+10 r==1+2 (12+4 r)

No solutions

----------------------------------

**********************************

Theorem 4.2 Case n = 4k+1

Lines 1 and 1

6+2 j1+2 j2==6+j2+2 r

9+j1+2 j2+2 r==1+2 (9+4 r)

No solutions

----------------------------------

Lines 1 and 2

9+2 j1+2 j2+2 r==8+j2+3 r

12+j1+2 j2+4 r==1+2 (9+4 r)

No solutions

----------------------------------

Lines 1 and 3

7+2 j1+2 j2==14+j2+6 r

10+j1+2 j2+2 r==1+2 (9+4 r)

No solutions

----------------------------------

Lines 1 and 4

8+2 j1+2 j2+2 r==15+j2+7 r

11+j1+2 j2+4 r==1+2 (9+4 r)

No solutions

----------------------------------

Lines 1 and 5

10+2 j1+4 r==14+7 r

13+j1+6 r==1+2 (9+4 r)

No solutions

----------------------------------

Lines 1 and 6

12+2 j1+4 r==17+8 r

15+j1+6 r==1+2 (9+4 r)

No solutions

----------------------------------

Lines 1 and 7

11+2 j1+4 r==12+6 r

14+j1+6 r==1+2 (9+4 r)

No solutions

----------------------------------

Lines 1 and 8

9+2 j1+4 r==11+6 r

12+j1+6 r==1+2 (9+4 r)

No solutions

----------------------------------

Lines 1 and 9

7+2 j1+2 r==13+6 r

10+j1+4 r==1+2 (9+4 r)

No solutions

----------------------------------

Lines 2 and 1

9+2 j1+2 j2+2 r==6+j2+2 r

11+j1+2 j2+3 r==1+2 (9+4 r)

No solutions

----------------------------------

Lines 2 and 2

12+2 j1+2 j2+4 r==8+j2+3 r

14+j1+2 j2+5 r==1+2 (9+4 r)

No solutions

----------------------------------

Lines 2 and 3

10+2 j1+2 j2+2 r==14+j2+6 r

12+j1+2 j2+3 r==1+2 (9+4 r)

No solutions

----------------------------------

Lines 2 and 4

11+2 j1+2 j2+4 r==15+j2+7 r

13+j1+2 j2+5 r==1+2 (9+4 r)

No solutions

----------------------------------

Lines 2 and 5

13+2 j1+6 r==14+7 r

15+j1+7 r==1+2 (9+4 r)

No solutions

----------------------------------

Lines 2 and 6

15+2 j1+6 r==17+8 r

17+j1+7 r==1+2 (9+4 r)

No solutions

----------------------------------

Lines 2 and 7

14+2 j1+6 r==12+6 r

16+j1+7 r==1+2 (9+4 r)

No solutions

----------------------------------

Lines 2 and 8

12+2 j1+6 r==11+6 r

14+j1+7 r==1+2 (9+4 r)

No solutions

----------------------------------

Lines 2 and 9

10+2 j1+4 r==13+6 r

12+j1+5 r==1+2 (9+4 r)

No solutions

----------------------------------

Lines 3 and 1

7+2 j1+2 j2==6+j2+2 r

17+j1+2 j2+6 r==1+2 (9+4 r)

{{j1->ConditionalExpression[2 C[1],
    C[1] \[Element] Integers && C[1] >= 1],
  j2->ConditionalExpression[3+2 C[1],
    C[1] \[Element] Integers && C[1] >= 1],
  r->ConditionalExpression[2+3 C[1],
    C[1] \[Element] Integers && C[1] >= 1]}}

----------------------------------

Lines 3 and 2

10+2 j1+2 j2+2 r==8+j2+3 r

20+j1+2 j2+8 r==1+2 (9+4 r)

No solutions

----------------------------------

Lines 3 and 3

8+2 j1+2 j2==14+j2+6 r

18+j1+2 j2+6 r==1+2 (9+4 r)

No solutions

----------------------------------

Lines 3 and 4

9+2 j1+2 j2+2 r==15+j2+7 r

19+j1+2 j2+8 r==1+2 (9+4 r)

No solutions

----------------------------------

Lines 3 and 5

11+2 j1+4 r==14+7 r

21+j1+10 r==1+2 (9+4 r)

No solutions

----------------------------------

Lines 3 and 6

13+2 j1+4 r==17+8 r

23+j1+10 r==1+2 (9+4 r)

No solutions

----------------------------------

Lines 3 and 7

12+2 j1+4 r==12+6 r

22+j1+10 r==1+2 (9+4 r)

No solutions

----------------------------------

Lines 3 and 8

10+2 j1+4 r==11+6 r

20+j1+10 r==1+2 (9+4 r)

No solutions

----------------------------------

Lines 3 and 9

8+2 j1+2 r==13+6 r

18+j1+8 r==1+2 (9+4 r)

No solutions

----------------------------------

Lines 4 and 1

8+2 j1+2 j2+2 r==6+j2+2 r

18+j1+2 j2+7 r==1+2 (9+4 r)

No solutions

----------------------------------

Lines 4 and 2

11+2 j1+2 j2+4 r==8+j2+3 r

21+j1+2 j2+9 r==1+2 (9+4 r)

No solutions

----------------------------------

Lines 4 and 3

9+2 j1+2 j2+2 r==14+j2+6 r

19+j1+2 j2+7 r==1+2 (9+4 r)

No solutions

----------------------------------

Lines 4 and 4

10+2 j1+2 j2+4 r==15+j2+7 r

20+j1+2 j2+9 r==1+2 (9+4 r)

No solutions

----------------------------------

Lines 4 and 5

12+2 j1+6 r==14+7 r

22+j1+11 r==1+2 (9+4 r)

No solutions

----------------------------------

Lines 4 and 6

14+2 j1+6 r==17+8 r

24+j1+11 r==1+2 (9+4 r)

No solutions

----------------------------------

Lines 4 and 7

13+2 j1+6 r==12+6 r

23+j1+11 r==1+2 (9+4 r)

No solutions

----------------------------------

Lines 4 and 8

11+2 j1+6 r==11+6 r

21+j1+11 r==1+2 (9+4 r)

No solutions

----------------------------------

Lines 4 and 9

9+2 j1+4 r==13+6 r

19+j1+9 r==1+2 (9+4 r)

No solutions

----------------------------------

Lines 5 and 1

10+2 j2+4 r==6+j2+2 r

17+2 j2+7 r==1+2 (9+4 r)

No solutions

----------------------------------

Lines 5 and 2

13+2 j2+6 r==8+j2+3 r

20+2 j2+9 r==1+2 (9+4 r)

No solutions

----------------------------------

Lines 5 and 3

11+2 j2+4 r==14+j2+6 r

18+2 j2+7 r==1+2 (9+4 r)

No solutions

----------------------------------

Lines 5 and 4

12+2 j2+6 r==15+j2+7 r

19+2 j2+9 r==1+2 (9+4 r)

No solutions

----------------------------------

Lines 5 and 5

14+8 r==14+7 r

21+11 r==1+2 (9+4 r)

No solutions

----------------------------------

Lines 5 and 6

16+8 r==17+8 r

23+11 r==1+2 (9+4 r)

No solutions

----------------------------------

Lines 5 and 7

15+8 r==12+6 r

22+11 r==1+2 (9+4 r)

No solutions

----------------------------------

Lines 5 and 8

13+8 r==11+6 r

20+11 r==1+2 (9+4 r)

No solutions

----------------------------------

Lines 5 and 9

11+6 r==13+6 r

18+9 r==1+2 (9+4 r)

No solutions

----------------------------------

Lines 6 and 1

12+2 j2+4 r==6+j2+2 r

20+2 j2+8 r==1+2 (9+4 r)

No solutions

----------------------------------

Lines 6 and 2

15+2 j2+6 r==8+j2+3 r

23+2 j2+10 r==1+2 (9+4 r)

No solutions

----------------------------------

Lines 6 and 3

13+2 j2+4 r==14+j2+6 r

21+2 j2+8 r==1+2 (9+4 r)

No solutions

----------------------------------

Lines 6 and 4

14+2 j2+6 r==15+j2+7 r

22+2 j2+10 r==1+2 (9+4 r)

No solutions

----------------------------------

Lines 6 and 5

16+8 r==14+7 r

24+12 r==1+2 (9+4 r)

No solutions

----------------------------------

Lines 6 and 6

18+8 r==17+8 r

26+12 r==1+2 (9+4 r)

No solutions

----------------------------------

Lines 6 and 7

17+8 r==12+6 r

25+12 r==1+2 (9+4 r)

No solutions

----------------------------------

Lines 6 and 8

15+8 r==11+6 r

23+12 r==1+2 (9+4 r)

No solutions

----------------------------------

Lines 6 and 9

True

21+10 r==1+2 (9+4 r)

No solutions

----------------------------------

Lines 7 and 1

11+2 j2+4 r==6+j2+2 r

15+2 j2+6 r==1+2 (9+4 r)

No solutions

----------------------------------

Lines 7 and 2

14+2 j2+6 r==8+j2+3 r

18+2 j2+8 r==1+2 (9+4 r)

No solutions

----------------------------------

Lines 7 and 3

12+2 j2+4 r==14+j2+6 r

16+2 j2+6 r==1+2 (9+4 r)

No solutions

----------------------------------

Lines 7 and 4

13+2 j2+6 r==15+j2+7 r

17+2 j2+8 r==1+2 (9+4 r)

No solutions

----------------------------------

Lines 7 and 5

15+8 r==14+7 r

19+10 r==1+2 (9+4 r)

No solutions

----------------------------------

Lines 7 and 6

True

21+10 r==1+2 (9+4 r)

No solutions

----------------------------------

Lines 7 and 7

16+8 r==12+6 r

20+10 r==1+2 (9+4 r)

No solutions

----------------------------------

Lines 7 and 8

14+8 r==11+6 r

18+10 r==1+2 (9+4 r)

No solutions

----------------------------------

Lines 7 and 9

12+6 r==13+6 r

16+8 r==1+2 (9+4 r)

No solutions

----------------------------------

Lines 8 and 1

9+2 j2+4 r==6+j2+2 r

14+2 j2+6 r==1+2 (9+4 r)

No solutions

----------------------------------

Lines 8 and 2

12+2 j2+6 r==8+j2+3 r

17+2 j2+8 r==1+2 (9+4 r)

No solutions

----------------------------------

Lines 8 and 3

10+2 j2+4 r==14+j2+6 r

15+2 j2+6 r==1+2 (9+4 r)

No solutions

----------------------------------

Lines 8 and 4

11+2 j2+6 r==15+j2+7 r

16+2 j2+8 r==1+2 (9+4 r)

No solutions

----------------------------------

Lines 8 and 5

13+8 r==14+7 r

18+10 r==1+2 (9+4 r)

No solutions

----------------------------------

Lines 8 and 6

15+8 r==17+8 r

20+10 r==1+2 (9+4 r)

No solutions

----------------------------------

Lines 8 and 7

14+8 r==12+6 r

19+10 r==1+2 (9+4 r)

No solutions

----------------------------------

Lines 8 and 8

12+8 r==11+6 r

17+10 r==1+2 (9+4 r)

No solutions

----------------------------------

Lines 8 and 9

10+6 r==13+6 r

15+8 r==1+2 (9+4 r)

No solutions

----------------------------------

Lines 9 and 1

7+2 j2+2 r==6+j2+2 r

16+2 j2+6 r==1+2 (9+4 r)

No solutions

----------------------------------

Lines 9 and 2

10+2 j2+4 r==8+j2+3 r

19+2 j2+8 r==1+2 (9+4 r)

No solutions

----------------------------------

Lines 9 and 3

8+2 j2+2 r==14+j2+6 r

17+2 j2+6 r==1+2 (9+4 r)

No solutions

----------------------------------

Lines 9 and 4

9+2 j2+4 r==15+j2+7 r

18+2 j2+8 r==1+2 (9+4 r)

No solutions

----------------------------------

Lines 9 and 5

11+6 r==14+7 r

20+10 r==1+2 (9+4 r)

No solutions

----------------------------------

Lines 9 and 6

13+6 r==17+8 r

22+10 r==1+2 (9+4 r)

No solutions

----------------------------------

Lines 9 and 7

True

21+10 r==1+2 (9+4 r)

No solutions

----------------------------------

Lines 9 and 8

10+6 r==11+6 r

19+10 r==1+2 (9+4 r)

No solutions

----------------------------------

Lines 9 and 9

8+4 r==13+6 r

17+8 r==1+2 (9+4 r)

No solutions

----------------------------------

**********************************

Theorem 4.4 Case n = 4k+2

Lines 1 and 1

8+2 j1+2 j2==5+j2+2 r

9+j1+2 j2+2 r==1+2 (2+4 r)

No solutions

----------------------------------

Lines 1 and 2

5+2 j1+2 j2+2 r==4+j2+3 r

6+j1+2 j2+4 r==1+2 (2+4 r)

No solutions

----------------------------------

Lines 1 and 3

7+2 j1+2 j2==5+j2+6 r

8+j1+2 j2+2 r==1+2 (2+4 r)

No solutions

----------------------------------

Lines 1 and 4

6+2 j1+2 j2+2 r==5+j2+7 r

7+j1+2 j2+4 r==1+2 (2+4 r)

No solutions

----------------------------------

Lines 1 and 5

4+2 j1+2 r==4+4 r

5+j1+4 r==1+2 (2+4 r)

No solutions

----------------------------------

Lines 1 and 6

5+2 j1+4 r==4+7 r

6+j1+6 r==1+2 (2+4 r)

No solutions

----------------------------------

Lines 1 and 7

6+2 j1+4 r==4+6 r

7+j1+6 r==1+2 (2+4 r)

No solutions

----------------------------------

Lines 1 and 8

4+2 j1+4 r==3+6 r

5+j1+6 r==1+2 (2+4 r)

No solutions

----------------------------------

Lines 2 and 1

5+2 j1+2 j2+2 r==5+j2+2 r

8+j1+2 j2+3 r==1+2 (2+4 r)

No solutions

----------------------------------

Lines 2 and 2

2+2 j1+2 j2+4 r==4+j2+3 r

5+j1+2 j2+5 r==1+2 (2+4 r)

No solutions

----------------------------------

Lines 2 and 3

4+2 j1+2 j2+2 r==5+j2+6 r

7+j1+2 j2+3 r==1+2 (2+4 r)

No solutions

----------------------------------

Lines 2 and 4

3+2 j1+2 j2+4 r==5+j2+7 r

6+j1+2 j2+5 r==1+2 (2+4 r)

No solutions

----------------------------------

Lines 2 and 5

1+2 j1+4 r==4+4 r

4+j1+5 r==1+2 (2+4 r)

No solutions

----------------------------------

Lines 2 and 6

2+2 j1+6 r==4+7 r

5+j1+7 r==1+2 (2+4 r)

No solutions

----------------------------------

Lines 2 and 7

3+2 j1+6 r==4+6 r

6+j1+7 r==1+2 (2+4 r)

No solutions

----------------------------------

Lines 2 and 8

1+2 j1+6 r==3+6 r

4+j1+7 r==1+2 (2+4 r)

No solutions

----------------------------------

Lines 3 and 1

7+2 j1+2 j2==5+j2+2 r

9+j1+2 j2+6 r==1+2 (2+4 r)

{{j1->ConditionalExpression[2 C[1],
    C[1] \[Element] Integers && C[1] >= 2],
  j2->ConditionalExpression[-2+2 C[1],
    C[1] \[Element] Integers && C[1] >= 2],
  r->ConditionalExpression[3 C[1],
    C[1] \[Element] Integers && C[1] >= 2]}}

----------------------------------

Lines 3 and 2

4+2 j1+2 j2+2 r==4+j2+3 r

6+j1+2 j2+8 r==1+2 (2+4 r)

No solutions

----------------------------------

Lines 3 and 3

6+2 j1+2 j2==5+j2+6 r

8+j1+2 j2+6 r==1+2 (2+4 r)

No solutions

----------------------------------

Lines 3 and 4

5+2 j1+2 j2+2 r==5+j2+7 r

7+j1+2 j2+8 r==1+2 (2+4 r)

No solutions

----------------------------------

Lines 3 and 5

3+2 j1+2 r==4+4 r

5+j1+8 r==1+2 (2+4 r)

No solutions

----------------------------------

Lines 3 and 6

4+2 j1+4 r==4+7 r

6+j1+10 r==1+2 (2+4 r)

No solutions

----------------------------------

Lines 3 and 7

5+2 j1+4 r==4+6 r

7+j1+10 r==1+2 (2+4 r)

No solutions

----------------------------------

Lines 3 and 8

3+2 j1+4 r==3+6 r

5+j1+10 r==1+2 (2+4 r)

No solutions

----------------------------------

Lines 4 and 1

6+2 j1+2 j2+2 r==5+j2+2 r

9+j1+2 j2+7 r==1+2 (2+4 r)

No solutions

----------------------------------

Lines 4 and 2

3+2 j1+2 j2+4 r==4+j2+3 r

6+j1+2 j2+9 r==1+2 (2+4 r)

No solutions

----------------------------------

Lines 4 and 3

5+2 j1+2 j2+2 r==5+j2+6 r

8+j1+2 j2+7 r==1+2 (2+4 r)

No solutions

----------------------------------

Lines 4 and 4

4+2 j1+2 j2+4 r==5+j2+7 r

7+j1+2 j2+9 r==1+2 (2+4 r)

No solutions

----------------------------------

Lines 4 and 5

2+2 j1+4 r==4+4 r

5+j1+9 r==1+2 (2+4 r)

No solutions

----------------------------------

Lines 4 and 6

3+2 j1+6 r==4+7 r

6+j1+11 r==1+2 (2+4 r)

No solutions

----------------------------------

Lines 4 and 7

4+2 j1+6 r==4+6 r

7+j1+11 r==1+2 (2+4 r)

No solutions

----------------------------------

Lines 4 and 8

2+2 j1+6 r==3+6 r

5+j1+11 r==1+2 (2+4 r)

No solutions

----------------------------------

Lines 5 and 1

4+2 j2+2 r==5+j2+2 r

8+2 j2+4 r==1+2 (2+4 r)

No solutions

----------------------------------

Lines 5 and 2

1+2 j2+4 r==4+j2+3 r

5+2 j2+6 r==1+2 (2+4 r)

No solutions

----------------------------------

Lines 5 and 3

3+2 j2+2 r==5+j2+6 r

7+2 j2+4 r==1+2 (2+4 r)

No solutions

----------------------------------

Lines 5 and 4

2+2 j2+4 r==5+j2+7 r

6+2 j2+6 r==1+2 (2+4 r)

No solutions

----------------------------------

Lines 5 and 5

4 r==4+4 r

4+6 r==1+2 (2+4 r)

No solutions

----------------------------------

Lines 5 and 6

1+6 r==4+7 r

5+8 r==1+2 (2+4 r)

No solutions

----------------------------------

Lines 5 and 7

2+6 r==4+6 r

6+8 r==1+2 (2+4 r)

No solutions

----------------------------------

Lines 5 and 8

6 r==3+6 r

4+8 r==1+2 (2+4 r)

No solutions

----------------------------------

Lines 6 and 1

5+2 j2+4 r==5+j2+2 r

8+2 j2+7 r==1+2 (2+4 r)

No solutions

----------------------------------

Lines 6 and 2

2+2 j2+6 r==4+j2+3 r

5+2 j2+9 r==1+2 (2+4 r)

No solutions

----------------------------------

Lines 6 and 3

4+2 j2+4 r==5+j2+6 r

7+2 j2+7 r==1+2 (2+4 r)

No solutions

----------------------------------

Lines 6 and 4

3+2 j2+6 r==5+j2+7 r

6+2 j2+9 r==1+2 (2+4 r)

No solutions

----------------------------------

Lines 6 and 5

1+6 r==4+4 r

4+9 r==1+2 (2+4 r)

No solutions

----------------------------------

Lines 6 and 6

2+8 r==4+7 r

5+11 r==1+2 (2+4 r)

No solutions

----------------------------------

Lines 6 and 7

3+8 r==4+6 r

6+11 r==1+2 (2+4 r)

No solutions

----------------------------------

Lines 6 and 8

1+8 r==3+6 r

4+11 r==1+2 (2+4 r)

No solutions

----------------------------------

Lines 7 and 1

6+2 j2+4 r==5+j2+2 r

8+2 j2+6 r==1+2 (2+4 r)

No solutions

----------------------------------

Lines 7 and 2

3+2 j2+6 r==4+j2+3 r

5+2 j2+8 r==1+2 (2+4 r)

No solutions

----------------------------------

Lines 7 and 3

5+2 j2+4 r==5+j2+6 r

7+2 j2+6 r==1+2 (2+4 r)

No solutions

----------------------------------

Lines 7 and 4

4+2 j2+6 r==5+j2+7 r

6+2 j2+8 r==1+2 (2+4 r)

No solutions

----------------------------------

Lines 7 and 5

2+6 r==4+4 r

4+8 r==1+2 (2+4 r)

No solutions

----------------------------------

Lines 7 and 6

3+8 r==4+7 r

5+10 r==1+2 (2+4 r)

No solutions

----------------------------------

Lines 7 and 7

4+8 r==4+6 r

6+10 r==1+2 (2+4 r)

No solutions

----------------------------------

Lines 7 and 8

2+8 r==3+6 r

4+10 r==1+2 (2+4 r)

No solutions

----------------------------------

Lines 8 and 1

4+2 j2+4 r==5+j2+2 r

7+2 j2+6 r==1+2 (2+4 r)

No solutions

----------------------------------

Lines 8 and 2

1+2 j2+6 r==4+j2+3 r

4+2 j2+8 r==1+2 (2+4 r)

No solutions

----------------------------------

Lines 8 and 3

3+2 j2+4 r==5+j2+6 r

6+2 j2+6 r==1+2 (2+4 r)

No solutions

----------------------------------

Lines 8 and 4

2+2 j2+6 r==5+j2+7 r

5+2 j2+8 r==1+2 (2+4 r)

No solutions

----------------------------------

Lines 8 and 5

6 r==4+4 r

3+8 r==1+2 (2+4 r)

No solutions

----------------------------------

Lines 8 and 6

1+8 r==4+7 r

4+10 r==1+2 (2+4 r)

No solutions

----------------------------------

Lines 8 and 7

2+8 r==4+6 r

5+10 r==1+2 (2+4 r)

No solutions

----------------------------------

Lines 8 and 8

8 r==3+6 r

3+10 r==1+2 (2+4 r)

No solutions

----------------------------------

**********************************

Theorem 4.4 Case n = 4k+3

Lines 1 and 1

8+2 j1+2 j2==7+j2+2 r

11+j1+2 j2+2 r==1+2 (3+4 r)

No solutions

----------------------------------

Lines 1 and 2

7+2 j1+2 j2+2 r==6+j2+3 r

10+j1+2 j2+4 r==1+2 (3+4 r)

No solutions

----------------------------------

Lines 1 and 3

5+2 j1+4 r==5+4 r

8+j1+6 r==1+2 (3+4 r)

No solutions

----------------------------------

Lines 1 and 4

5+2 j1+2 r==6+4 r

8+j1+4 r==1+2 (3+4 r)

No solutions

----------------------------------

Lines 1 and 5

6+2 j1+4 r==6+5 r

9+j1+6 r==1+2 (3+4 r)

No solutions

----------------------------------

Lines 1 and 6

4+2 j1+4 r==6+6 r

7+j1+6 r==1+2 (3+4 r)

No solutions

----------------------------------

Lines 1 and 7

7+2 j1+4 r==7+6 r

10+j1+6 r==1+2 (3+4 r)

No solutions

----------------------------------

Lines 1 and 8

7+2 j1+2 j2==8+j2+6 r

10+j1+2 j2+2 r==1+2 (3+4 r)

No solutions

----------------------------------

Lines 1 and 9

6+2 j1+2 j2+2 r==7+j2+7 r

9+j1+2 j2+4 r==1+2 (3+4 r)

No solutions

----------------------------------

Lines 2 and 1

7+2 j1+2 j2+2 r==7+j2+2 r

10+j1+2 j2+3 r==1+2 (3+4 r)

No solutions

----------------------------------

Lines 2 and 2

6+2 j1+2 j2+4 r==6+j2+3 r

9+j1+2 j2+5 r==1+2 (3+4 r)

No solutions

----------------------------------

Lines 2 and 3

4+2 j1+6 r==5+4 r

7+j1+7 r==1+2 (3+4 r)

No solutions

----------------------------------

Lines 2 and 4

4+2 j1+4 r==6+4 r

7+j1+5 r==1+2 (3+4 r)

No solutions

----------------------------------

Lines 2 and 5

5+2 j1+6 r==6+5 r

8+j1+7 r==1+2 (3+4 r)

No solutions

----------------------------------

Lines 2 and 6

3+2 j1+6 r==6+6 r

6+j1+7 r==1+2 (3+4 r)

No solutions

----------------------------------

Lines 2 and 7

6+2 j1+6 r==7+6 r

9+j1+7 r==1+2 (3+4 r)

No solutions

----------------------------------

Lines 2 and 8

6+2 j1+2 j2+2 r==8+j2+6 r

9+j1+2 j2+3 r==1+2 (3+4 r)

No solutions

----------------------------------

Lines 2 and 9

5+2 j1+2 j2+4 r==7+j2+7 r

8+j1+2 j2+5 r==1+2 (3+4 r)

No solutions

----------------------------------

Lines 3 and 1

5+2 j2+4 r==7+j2+2 r

9+2 j2+4 r==1+2 (3+4 r)

No solutions

----------------------------------

Lines 3 and 2

4+2 j2+6 r==6+j2+3 r

8+2 j2+6 r==1+2 (3+4 r)

No solutions

----------------------------------

Lines 3 and 3

2+8 r==5+4 r

6+8 r==1+2 (3+4 r)

No solutions

----------------------------------

Lines 3 and 4

2+6 r==6+4 r

6+6 r==1+2 (3+4 r)

No solutions

----------------------------------

Lines 3 and 5

3+8 r==6+5 r

7+8 r==1+2 (3+4 r)

No solutions

----------------------------------

Lines 3 and 6

1+8 r==6+6 r

5+8 r==1+2 (3+4 r)

No solutions

----------------------------------

Lines 3 and 7

4+8 r==7+6 r

8+8 r==1+2 (3+4 r)

No solutions

----------------------------------

Lines 3 and 8

4+2 j2+4 r==8+j2+6 r

8+2 j2+4 r==1+2 (3+4 r)

No solutions

----------------------------------

Lines 3 and 9

3+2 j2+6 r==7+j2+7 r

7+2 j2+6 r==1+2 (3+4 r)

No solutions

----------------------------------

Lines 4 and 1

5+2 j2+2 r==7+j2+2 r

10+2 j2+4 r==1+2 (3+4 r)

No solutions

----------------------------------

Lines 4 and 2

4+2 j2+4 r==6+j2+3 r

9+2 j2+6 r==1+2 (3+4 r)

No solutions

----------------------------------

Lines 4 and 3

2+6 r==5+4 r

7+8 r==1+2 (3+4 r)

No solutions

----------------------------------

Lines 4 and 4

2+4 r==6+4 r

7+6 r==1+2 (3+4 r)

No solutions

----------------------------------

Lines 4 and 5

3+6 r==6+5 r

8+8 r==1+2 (3+4 r)

No solutions

----------------------------------

Lines 4 and 6

1+6 r==6+6 r

6+8 r==1+2 (3+4 r)

No solutions

----------------------------------

Lines 4 and 7

4+6 r==7+6 r

9+8 r==1+2 (3+4 r)

No solutions

----------------------------------

Lines 4 and 8

4+2 j2+2 r==8+j2+6 r

9+2 j2+4 r==1+2 (3+4 r)

No solutions

----------------------------------

Lines 4 and 9

3+2 j2+4 r==7+j2+7 r

8+2 j2+6 r==1+2 (3+4 r)

No solutions

----------------------------------

Lines 5 and 1

6+2 j2+4 r==7+j2+2 r

10+2 j2+5 r==1+2 (3+4 r)

No solutions

----------------------------------

Lines 5 and 2

5+2 j2+6 r==6+j2+3 r

9+2 j2+7 r==1+2 (3+4 r)

No solutions

----------------------------------

Lines 5 and 3

3+8 r==5+4 r

7+9 r==1+2 (3+4 r)

No solutions

----------------------------------

Lines 5 and 4

3+6 r==6+4 r

7+7 r==1+2 (3+4 r)

No solutions

----------------------------------

Lines 5 and 5

4+8 r==6+5 r

8+9 r==1+2 (3+4 r)

No solutions

----------------------------------

Lines 5 and 6

2+8 r==6+6 r

6+9 r==1+2 (3+4 r)

No solutions

----------------------------------

Lines 5 and 7

5+8 r==7+6 r

9+9 r==1+2 (3+4 r)

No solutions

----------------------------------

Lines 5 and 8

5+2 j2+4 r==8+j2+6 r

9+2 j2+5 r==1+2 (3+4 r)

No solutions

----------------------------------

Lines 5 and 9

4+2 j2+6 r==7+j2+7 r

8+2 j2+7 r==1+2 (3+4 r)

No solutions

----------------------------------

Lines 6 and 1

4+2 j2+4 r==7+j2+2 r

10+2 j2+6 r==1+2 (3+4 r)

No solutions

----------------------------------

Lines 6 and 2

3+2 j2+6 r==6+j2+3 r

9+2 j2+8 r==1+2 (3+4 r)

No solutions

----------------------------------

Lines 6 and 3

1+8 r==5+4 r

7+10 r==1+2 (3+4 r)

No solutions

----------------------------------

Lines 6 and 4

1+6 r==6+4 r

7+8 r==1+2 (3+4 r)

No solutions

----------------------------------

Lines 6 and 5

2+8 r==6+5 r

8+10 r==1+2 (3+4 r)

No solutions

----------------------------------

Lines 6 and 6

8 r==6+6 r

6+10 r==1+2 (3+4 r)

No solutions

----------------------------------

Lines 6 and 7

3+8 r==7+6 r

9+10 r==1+2 (3+4 r)

No solutions

----------------------------------

Lines 6 and 8

3+2 j2+4 r==8+j2+6 r

9+2 j2+6 r==1+2 (3+4 r)

No solutions

----------------------------------

Lines 6 and 9

2+2 j2+6 r==7+j2+7 r

8+2 j2+8 r==1+2 (3+4 r)

No solutions

----------------------------------

Lines 7 and 1

7+2 j2+4 r==7+j2+2 r

11+2 j2+6 r==1+2 (3+4 r)

No solutions

----------------------------------

Lines 7 and 2

6+2 j2+6 r==6+j2+3 r

10+2 j2+8 r==1+2 (3+4 r)

No solutions

----------------------------------

Lines 7 and 3

4+8 r==5+4 r

8+10 r==1+2 (3+4 r)

No solutions

----------------------------------

Lines 7 and 4

4+6 r==6+4 r

8+8 r==1+2 (3+4 r)

No solutions

----------------------------------

Lines 7 and 5

5+8 r==6+5 r

9+10 r==1+2 (3+4 r)

No solutions

----------------------------------

Lines 7 and 6

3+8 r==6+6 r

7+10 r==1+2 (3+4 r)

No solutions

----------------------------------

Lines 7 and 7

6+8 r==7+6 r

10+10 r==1+2 (3+4 r)

No solutions

----------------------------------

Lines 7 and 8

6+2 j2+4 r==8+j2+6 r

10+2 j2+6 r==1+2 (3+4 r)

No solutions

----------------------------------

Lines 7 and 9

5+2 j2+6 r==7+j2+7 r

9+2 j2+8 r==1+2 (3+4 r)

No solutions

----------------------------------

Lines 8 and 1

7+2 j1+2 j2==7+j2+2 r

12+j1+2 j2+6 r==1+2 (3+4 r)

{{j1->ConditionalExpression[1+2 C[1],
    C[1] \[Element] Integers && C[1] >= 4],
  j2->ConditionalExpression[-4+2 C[1],
    C[1] \[Element] Integers && C[1] >= 4],
  r->ConditionalExpression[-1+3 C[1],
    C[1] \[Element] Integers && C[1] >= 4]}}

----------------------------------

Lines 8 and 2

6+2 j1+2 j2+2 r==6+j2+3 r

11+j1+2 j2+8 r==1+2 (3+4 r)

No solutions

----------------------------------

Lines 8 and 3

4+2 j1+4 r==5+4 r

9+j1+10 r==1+2 (3+4 r)

No solutions

----------------------------------

Lines 8 and 4

4+2 j1+2 r==6+4 r

9+j1+8 r==1+2 (3+4 r)

No solutions

----------------------------------

Lines 8 and 5

5+2 j1+4 r==6+5 r

10+j1+10 r==1+2 (3+4 r)

No solutions

----------------------------------

Lines 8 and 6

3+2 j1+4 r==6+6 r

8+j1+10 r==1+2 (3+4 r)

No solutions

----------------------------------

Lines 8 and 7

6+2 j1+4 r==7+6 r

11+j1+10 r==1+2 (3+4 r)

No solutions

----------------------------------

Lines 8 and 8

6+2 j1+2 j2==8+j2+6 r

11+j1+2 j2+6 r==1+2 (3+4 r)

No solutions

----------------------------------

Lines 8 and 9

5+2 j1+2 j2+2 r==7+j2+7 r

10+j1+2 j2+8 r==1+2 (3+4 r)

No solutions

----------------------------------

Lines 9 and 1

6+2 j1+2 j2+2 r==7+j2+2 r

11+j1+2 j2+7 r==1+2 (3+4 r)

{{j1->0,j2->1,r->6}}

----------------------------------

Lines 9 and 2

5+2 j1+2 j2+4 r==6+j2+3 r

10+j1+2 j2+9 r==1+2 (3+4 r)

No solutions

----------------------------------

Lines 9 and 3

3+2 j1+6 r==5+4 r

8+j1+11 r==1+2 (3+4 r)

No solutions

----------------------------------

Lines 9 and 4

3+2 j1+4 r==6+4 r

8+j1+9 r==1+2 (3+4 r)

No solutions

----------------------------------

Lines 9 and 5

4+2 j1+6 r==6+5 r

9+j1+11 r==1+2 (3+4 r)

No solutions

----------------------------------

Lines 9 and 6

2+2 j1+6 r==6+6 r

7+j1+11 r==1+2 (3+4 r)

No solutions

----------------------------------

Lines 9 and 7

5+2 j1+6 r==7+6 r

10+j1+11 r==1+2 (3+4 r)

No solutions

----------------------------------

Lines 9 and 8

5+2 j1+2 j2+2 r==8+j2+6 r

10+j1+2 j2+7 r==1+2 (3+4 r)

No solutions

----------------------------------

Lines 9 and 9

4+2 j1+2 j2+4 r==7+j2+7 r

9+j1+2 j2+9 r==1+2 (3+4 r)

No solutions

----------------------------------

**********************************

Exception Cases:

Theorem, First Line, Second Line, Solution

Theorem 4.2 Case n = 4k, 3, 1, 
{{j1 -> ConditionalExpression[2 C[1], 
    C[1] \[Element] Integers && C[1] >= 2], 
  j2 -> ConditionalExpression[6 + 2 C[1], 
    C[1] \[Element] Integers && C[1] >= 2], 
  r -> ConditionalExpression[4 + 3 C[1], 
    C[1] \[Element] Integers && C[1] >= 2]}}

Theorem 4.2 Case n = 4k, 3, 2, 
{{j1 -> 1, j2 -> 0, r -> 5}}

Theorem 4.2 Case n = 4k+1, 3, 1, 
{{j1 -> ConditionalExpression[2 C[1], 
    C[1] \[Element] Integers && C[1] >= 1], 
  j2 -> ConditionalExpression[3 + 2 C[1], 
    C[1] \[Element] Integers && C[1] >= 1], 
  r -> ConditionalExpression[2 + 3 C[1], 
    C[1] \[Element] Integers && C[1] >= 1]}}

Theorem 4.4 Case n = 4k+2, 3, 1, 
{{j1 -> ConditionalExpression[2 C[1], 
    C[1] \[Element] Integers && C[1] >= 2], 
  j2 -> ConditionalExpression[-2 + 2 C[1], 
    C[1] \[Element] Integers && C[1] >= 2], 
  r -> ConditionalExpression[3 C[1], 
    C[1] \[Element] Integers && C[1] >= 2]}}

Theorem 4.4 Case n = 4k+3, 8, 1, 
{{j1 -> ConditionalExpression[1 + 2 C[1], 
    C[1] \[Element] Integers && C[1] >= 4], 
  j2 -> ConditionalExpression[-4 + 2 C[1], 
    C[1] \[Element] Integers && C[1] >= 4], 
  r -> ConditionalExpression[-1 + 3 C[1], 
    C[1] \[Element] Integers && C[1] >= 4]}}

Theorem 4.4 Case n = 4k+3, 9, 1, 
{{j1 -> 0, j2 -> 1, r -> 6}}
\end{verbatim}
\end{multicols}
\normalsize

\end{document}